\documentclass[12pt,a4paper]{amsart}
\usepackage[utf8]{inputenc}
\usepackage{fullpage}
\usepackage{amsfonts}
\usepackage{amssymb}
\usepackage{enumitem}
\usepackage{graphicx}
\usepackage{url}
\usepackage{hyperref}
\usepackage[capitalise,noabbrev]{cleveref}
\usepackage{amsmath}
\usepackage{amsthm}
\usepackage{color}
\usepackage{todonotes}

\usepackage[backend=bibtex,block=space,sortcites=true,maxbibnames=99,style=alphabetic]{biblatex}
\usepackage{doi}
\renewbibmacro{in:}{}
\DeclareFieldFormat[article]{volume}{\mkbibbold{#1}}
\DeclareFieldFormat[article]{number}{(#1)}
\DeclareFieldFormat{doi}{\href{https://dx.doi.org/#1}{\textsc{doi}}}
\AtEveryBibitem{\clearfield{issn}\clearfield{isbn}\clearlist{language}}
\renewbibmacro*{volume+number+eid}{%
  \printfield{volume}%
  \printfield{number}%
  \setunit{\bibeidpunct}%
  \printfield{eid}}
\def\abx@missing@entry#1{\abx@missing{#1??}}

\newtheorem{theorem}{Theorem}[section]
\newtheorem{lemma}[theorem]{Lemma}
\newtheorem{prop}[theorem]{Proposition}
\newtheorem{cor}[theorem]{Corollary}  

\newtheorem{conj}[theorem]{Conjecture}
\newtheorem{defn}[theorem]{Definition}

\theoremstyle{definition}
\newtheorem{rem}[theorem]{Remark}

\crefname{theorem}{Theorem}{Theorems}
\crefname{prop}{Proposition}{Propositions}
\crefname{lemma}{Lemma}{Lemmas}
\crefname{cor}{Corollary}{Corollaries}
\crefname{rem}{Remark}{Remarks}
\crefname{hyp}{Hypothesis}{Hypotheses}
\crefname{conj}{Conjecture}{Conjectures}
\crefname{defn}{Definition}{Definitions}
\crefname{constr}{Construction}{Constructions}
\crefname{enumi}{}{} 

\newcommand{\alf}{\operatorname{asc}} 
\newcommand{\dyck}{\mathcal{D}} 
\newcommand{\mdyck}{\mathcal{D}^{(m)}} 
\newcommand{\leqgreedy}{\leq_g} 
\newcommand{\covgreedy}{\lessdot_g} 
\newcommand{\tree}{\mathcal{T}} 
\newcommand{\mtree}{\mathcal{T}^{(m)}} 
\newcommand{\mrtree}{\mathcal{T}^{(m,r)}} 
\newcommand{\minterval}{\mathcal{I}^{(m)}} 
\newcommand{\mrinterval}[1][r]{\mathcal{I}^{(m,#1)}} 
\newcommand{\interval}{\mathcal{I}} 
\newcommand{\firstasc}{\operatorname{fasc}} 
\newcommand{\mconst}{\mathcal{C}^{(m)}} 
\newcommand{\map}{\mathcal{M}} 
\newcommand{\outdeg}{\operatorname{outdeg}} 
\newcommand{\portion}{\operatorname{Pt}} 
\newcommand{\straight}{r} 
\newcommand{\midtree}{\operatorname{mid}} 
\newcommand{\addleaf}{\Delta} 
\newcommand{\intweight}{\operatorname{wt}} 
\newcommand{\intproduct}{\oplus} 
\newcommand{\mintweight}{\operatorname{wt}_m} 
\newcommand{\mrintweight}{\operatorname{wt}_{m, r}} 

\newcommand{\tdef}[1]{\textcolor{blue}{\emph{#1}}}
\newcommand{\insertfig}[2]{\includegraphics[page=#1, width=#2\textwidth]{fig-ipe.pdf}}

\title{Decomposition of Greedy Tamari Intervals and Bipartite Planar Maps}

\author{Philippe Biane}
\address{Univ Gustave Eiffel, CNRS, LIGM, F-77454 Marne-la-Vall\'e, France}
\email{philippe.biane@univ-eiffel.fr}
\thanks{Supported by the project CARPLO (ANR-20-CE40-0007).}

\author{Wenjie Fang}
\address{Univ Gustave Eiffel, CNRS, LIGM, F-77454 Marne-la-Vall\'ee, France}
\email{wenjie.fang@univ-eiffel.fr}
\thanks{Supported by ANR--FWF project PAGCAP (ANR-21-CE48-0020) and CartesEtPlus (ANR-23-CE48-0018).}

\begin{document}

\begin{abstract}
  The greedy Tamari poset, inspired by the well-studied Tamari lattice, was recently defined by Dermenjian in the more general setting of greedy $\nu$-Tamari posets. Bousquet-Mélou and Chapoton counted intervals of the greedy $m$-Tamari poset in 2024 by solving a functional equation, and found that they are equi-enumerous to planar $(m+1)$-constellations. In this work, we give a combinatorial proof of this fact for the case $m = 1$, which also gives the refined enumeration conjectured by Bousquet-Mélou and Chapoton. This is done by establishing a recursive decomposition of greedy Tamari intervals isomorphic to that of bipartite planar maps. We also propose a more general and refined conjecture for the case of general $m$.
\end{abstract}

\maketitle

\section{Introduction}

The Tamari lattice is a much studied object in combinatorics and geometry, whose definition goes back to the work of Dov Tamari in 1962 \cite{tamari}, see \cite{tam-book} for information on this subject. Later, generalizations of the Tamari lattice, such as the $m$-Tamari lattice \cite{m-tamari-def} and the $\nu$-Tamari lattice \cite{nu-tamari}, were proposed with motivation from other domains of combinatorics, such as the study of diagonal coinvariant spaces \cite{m-tamari-def}. It has been observed, since the seminal work of Chapoton \cite{chapoton-counting}, that counting intervals in the Tamari lattice and its variants often yields formulas that also count objects related to planar maps, which are drawings of graphs on the plane. Such results are often first obtained by solving functional equations, as in \cite{chapoton-counting}, then reproved bijectively, as in \cite{bernardi-bonichon,fang-tamari,tamari-blossoming}. However, there is the exception of the intervals of  $m$-Tamari lattices. Their counting formula obtained in \cite{m-tamari} looks close to those for the enumeration of planar maps, but no combinatorial proof is known, nor any natural family of planar maps with the same counting formula. It is thus a surprising news that Bousquet-M\'elou and Chapoton found and proved in \cite{greedy-tamari-interval} that the number of intervals in the greedy $m$-Tamari poset, whose definition is inspired by that of the $m$-Tamari lattice, is equal to the number of certain planar maps.

The greedy $m$-Tamari poset was considered recently by Dermenjian in \cite{dermenjian} as a special case of the greedy $\nu$-Tamari poset, defined on the same ground set as $\nu$-Tamari lattices, but with cover relations produced by a ``greedy'' version of the rules for $\nu$-Tamari lattices. When defined on $m$-ballot paths of the same size, we obtain the greedy $m$-Tamari poset, which was shown in \cite[Theorem~34]{dermenjian} to be isomorphic to the subposet of elements with maximal in-degree in the $(m+1)$-Tamari lattice. Soon afterwards, Bousquet-M\'elou and Chapoton enumerated in \cite{greedy-tamari-interval} the number of intervals in the greedy $m$-Tamari poset by solving a functional equation based on a recursive decomposition of such intervals. They found that the counting formula is the same as that of planar $(m+1)$-constellations, a family of planar maps first enumerated in \cite{constellation} using bijections. In \cite[Conjecture~6.1]{greedy-tamari-interval}, they also conjectured that a certain refined enumeration of greedy $m$-Tamari intervals is the same as that of planar $(m+1)$-constellations refined by the profile of certain face degrees.

Our main contribution in this article is proving \cite[Conjecture~6.1]{greedy-tamari-interval} combinatorially for $m = 1$ by exhibiting a recursive decomposition of greedy Tamari intervals that is isomorphic to the classical recursive decomposition of planar bipartite maps from Tutte \cite{tutte-slicing}. Such an isomorphism gives naturally a recursively-defined bijection between greedy Tamari intervals and planar bipartite maps. Furthermore, we show that the statistics used in the refined counting on each side are transferred according to the conjecture. The greedy Tamari poset was originally defined on lattice paths, but in our proof we realize it on a certain family of plane trees, which provides a better understanding of the intervals we study. Unfortunately, we did not succeed in adapting our approach to the general $m$ case. However, we will propose a generalized and refined version of \cite[Conjecture~6.1]{greedy-tamari-interval} related to the recursive decomposition of constellations in \cite[Section~4.1]{fang-thesis}.

This article is organized as follows. In \cref{sec:defn}, we give the definitions of the greedy Tamari poset and its intervals, using both Dyck paths and plane trees. We also define bipartite planar maps and their recursive decomposition from Tutte. We then present in \cref{sec:decomp} the recursive decomposition of greedy Tamari intervals, which leads to the recursively-defined bijection between these intervals and bipartite maps. Finally, in \cref{sec:m-decomp} we consider the $m$-versions of the previous objects (namely $m$-greedy Tamari posets and $(m+1)$-constellations) and we state a generalized and refined conjecture for a bijection between these objects.

\section{Greedy Tamari intervals and bipartite maps} \label{sec:defn}

\subsection{The greedy Tamari poset on Dyck paths} \label{sec:greedy-dyck}

We start by defining the greedy Tamari poset over Dyck paths following 
\cite{dermenjian,greedy-tamari-interval}. In the following, we denote by $|w|_a$ the number of letters $a$ in a word $w$, and by $\cdot$ the concatenation of words.

A \tdef{Dyck path} of size $n$ is a word $w$ on the alphabet $\{0,1\}$ with $|w|_0 = |w|_1 = n$, such that for each prefix $w'$ of $w$, we have $|w'|_0 \leq |w'|_1$. We denote by $\dyck_n$ the set of Dyck paths with $2n$ letters. Such $w \in \dyck_n$ is usually represented as a lattice path on $\mathbb{Z}^2$ starting from $(0, 0)$, with \tdef{up steps} $(1, -1)$ (resp. \tdef{down steps} $(1, 1)$) standing for the letter $1$ (resp. $0$), while staying in the upper half plane $y \geq 0$ and ending at $(2n,0)$. An \tdef{ascent} (resp. \tdef{descent}) of a Dyck path $w$ is a maximal segment of consecutive $1$'s (resp. $0$'s) in $w$, while a \tdef{valley} (resp. \tdef{peak}) is an occurrence of the factor $01$ (resp. $10$). 

\begin{defn} \label{defn:greedy-poset}
  For $w \in \dyck_n$ with a marked valley, we write $w = A \cdot 01 \cdot A'$, where $01$ is the valley. Let $w^\circ$ be the longest prefix of $1 \cdot A'$ that is also a Dyck path, then $w$ has a factorization $w = A \cdot 0 \cdot w^\circ \cdot A''$. Let $w' = A \cdot w^\circ \cdot 0 \cdot A''$, this is clearly a Dyck path and we  declare that $w \covgreedy w'$. The relation $\covgreedy$ thus obtained by considering  all $w \in \dyck_n$ with all possible marked valleys, is the cover relation of an order relation $(\dyck_n, \leqgreedy)$ the transitive closure of $\covgreedy$.  The \tdef{greedy Tamari poset} is $(\dyck_n, \leqgreedy)$. The relation $\leqgreedy$ is also called the \tdef{greedy Tamari order}. See \Cref{fig:greedy-tamari} for an example of the cover relation $\covgreedy$ and the greedy Tamari poset of size $4$.
\end{defn}

\begin{figure}
  \centering
  \insertfig{9}{1}
  \caption{Example of the cover relation $\covgreedy$, and the greedy Tamari poset $(\dyck_4, \leqgreedy)$ of size $4$}
  \label{fig:greedy-tamari}
\end{figure}

There is a unique maximal element in $(\dyck_n, \leqgreedy)$, namely the path $1^n 0^{n}$, as it is the only element without valley, thus never covered. However, unlike the classical Tamari lattice, in general $(\dyck_n, \leqgreedy)$ does not have a unique minimal element, hence it is not a lattice.

Note that, if in the previous description we take $w^\circ$ to be the shortest prefix, then we get the cover relation for the classical Tamari order.
This implies that the greedy Tamari order is coarser that the classical order. 

\subsection{Plane trees and Dyck paths} \label{sec:plane-tree}

We denote by $\tree_n$ the set of rooted plane trees with $n$ edges, which are always drawn with the root at the bottom. There is a well-known bijection between Dyck paths in $\dyck_n$ and plane trees in $\tree_n$.

\begin{defn} \label{defn:contour-path}
  Given a plane tree $T$, its \tdef{contour path}, denoted by $w_T$, is the Dyck path obtained from the left-to-right contour walk of $T$, where the first traversal (resp. last traversal) of an edge is transcribed into a letter $1$ (resp. $0$). It is clear that the leaves of $T$ correspond bijectively to the peaks of $w_T$. See \Cref{fig:tree-path-stat} for an example.
\end{defn}

\begin{figure}
  \centering
  \insertfig{10}{1}
  \caption{Example of a plane tree $T$, its ascent profile $\alf(T)$ and its contour walk $w_T$}
  \label{fig:tree-path-stat}
\end{figure}
 
In the following, for a given plane tree $T \in \tree_n$, we denote its nodes by $u_0, u_1, \ldots, u_n$ in the order of first visit in the left-to-right depth-first search order, that we call the \tdef{pre-order} hereinafter. See \Cref{fig:tree-path-stat} for an illustration. In this context, we also say that $u_i$ is the $i$-th node of $T$ in the pre-order. In particular, the $0$-th node is always the root. For $i > 0$, we denote by $e_i$ the edge linking $u_i$ to its parent. It is clear that, in the left-to-right contour walk of $T$, edges of $T$ are visited for the first time in the order $e_1, \ldots, e_n$.
 
For $u_i$ a leaf of $T$, let $j \geq 0$ be the smallest index such that, for all $\ell$ with $j \leq \ell < i$, the node $u_{\ell}$ is the parent of $u_{\ell+1}$. We call the sequence of edges $(\{u_{\ell}, u_{\ell+1}\})_{j \leq \ell < i}$ the \tdef{ascent} of $u_i$, which is of length $i - j$. It is clear from \Cref{defn:contour-path} that each ascent of $T$ corresponds to an ascent of $w_T$ of the same length. We denote by $\firstasc(T)$ the length of the only ascent of $T$ containing the root $u_0$, which corresponds to the first ascent of $w_T$. The ascents of $T$ partition edges $e_1, \ldots, e_n$ of $T$ into parts whose edge indices form intervals. The \tdef{ascent profile} of $T$, denoted by $\alf(T)$, is the sequence of lengths $(a_1, \ldots, a_r)$ of ascents in the left-to-right contour order. We note that $\firstasc(T) = a_1$ in this case. See \Cref{fig:tree-path-stat} for an illustration.

\subsection{The greedy Tamari poset on trees} \label{sec:greedy-tree}

For a plane tree $T \in \tree_n$, an \tdef{inner corner} of $T$ at a node $v$ is a corner at $v$ between two consecutive children of $v$, namely $v'$ and $v''$. It is clear from \Cref{defn:contour-path} that the inner corners of $T$ are in bijection with the valleys of its contour path $w_T$.

\begin{defn} \label{defn:greedy-poset-tree}
  For a plane tree $T \in \tree_n$ with a node $v''$ that is not the leftmost child of its parent $v$, we define the \tdef{slide-up} operation at $v''$ as follows:
  \begin{enumerate}
  \item Take $v'$ be the child of $v$ preceding $v''$, thus $(v, v', v'')$ form an inner corner;
  \item Detach edges of $v$ to subtrees rooted at $v''$ and all children of $v$ to its right;
  \item Slide these detached subtrees up along the edge $\{v, v'\}$ and reattach them to $v'$.
  \end{enumerate}
  See \Cref{fig:greedy-poset-tree} for an example of the slide-up operation. 
  
  By abuse of notation, denote by $\covgreedy$ the relation on $\tree_n$ such that $T \covgreedy T'$ if $T'$ can be obtained from $T$ by a slide-up. More precisely, write $T \covgreedy^{(i)} T'$ if $T'$ is obtained from $T'$ by a slide-up at the $i$-th node $u_i$ of $T$ in the pre-order. The relation $\covgreedy$ is the cover relation of an order $\leqgreedy$ and we denote by $(\tree_n, \leqgreedy)$ the corresponding poset.  See \Cref{fig:greedy-poset-tree} for an illustration of $(\tree_4, \leqgreedy)$.
\end{defn}

\begin{figure}
  \centering
  \insertfig{11}{1}
  \caption{Example of slide-up operation and the poset $(\tree_4, \leqgreedy)$.}
  \label{fig:greedy-poset-tree}
\end{figure}

We note that the slide-up at a vertex $v$ is possible if and only if $v$ is not the leftmost child of its parent. Our use of the same notation $\covgreedy$ for the orders on $\dyck_n$ and $\tree_n$ is justified by the following proposition.

\begin{prop} \label{prop:tree-greedy-poset}
  The poset $(\tree_n, \leqgreedy)$ is isomorphic to the greedy Tamari poset $(\dyck_n, \leqgreedy)$ by taking the contour path.
\end{prop}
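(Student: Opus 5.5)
Since $T \mapsto w_T$ is a bijection $\tree_n \to \dyck_n$, and both orders are defined as transitive closures of their cover relations, it suffices to show that cover relations correspond: $T \covgreedy T'$ if and only if $w_T \covgreedy w_{T'}$. I will match the marked data on both sides and check that the two operations agree. As noted before the statement, inner corners of $T$ correspond bijectively to valleys of $w_T$. The corner $(v,v',v'')$ corresponds to the valley $01$ in which the $0$ is the last traversal of the edge $\{v,v'\}$ and the $1$ is the first traversal of $\{v,v''\}$. So slide-ups on $T$ (indexed by non-leftmost children $v''$) are in bijection with marked valleys of $w_T$. It remains to show that, for matching choices, the resulting objects correspond.

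\textbf{Identifying $w^\circ$.} Write $w_T = A\cdot 0\cdot 1\cdot A'$ with the marked valley as above. After the first traversal of $\{v,v''\}$, the contour walk goes around the subtree of $v''$ and returns along $\{v,v''\}$. It then visits the subtrees of the later children of $v$ in the same way, and finally descends along the edge from $v$ to its parent (or ends, if $v$ is the root). Let $v''=c_1,\dots,c_k$ be the children of $v$ from $v''$ rightwards. Then the word $1\cdot A'$ begins with $D_1 D_2\cdots D_k$, where $D_j = 1\cdot w_{T_{c_j}}\cdot 0$ is a Dyck path and $T_{c_j}$ is the subtree rooted at $c_j$. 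This is followed either by a $0$ (the descent from $v$) or by nothing. Relative to the start of $1\cdot A'$, the height is $0$ exactly at the ends of the $D_j$, and it becomes negative immediately afterwards. Therefore the longest Dyck prefix of $1\cdot A'$ is $w^\circ = D_1\cdots D_k$. This is the only point that needs care: "longest prefix" must capture all right siblings, not just the first, which is precisely where the greedy order differs from the classical Tamari order.

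\textbf{Matching the outputs.} With $w^\circ$ identified, $w' = A\cdot w^\circ\cdot 0\cdot A''$. Now compute the contour of $T'$. It agrees with $w_T$ up to the last return to $v'$ after exploring the subtree of $v'$, which gives the prefix $A$. Then, still at $v'$, it explores the reattached subtrees $T_{c_1},\dots,T_{c_k}$ as new rightmost children of $v'$, producing $D_1\cdots D_k = w^\circ$. Next it descends along $\{v,v'\}$, producing $0$. Since $v$ has no children left after $v'$, it then continues exactly as $w_T$ does after $w^\circ$, producing $A''$. Hence $w_{T'} = w'$.

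Thus the bijection carries each slide-up to the corresponding greedy move, and every greedy move on Dyck paths arises this way. Consequently the two cover relations, and hence their transitive closures, correspond under $T\mapsto w_T$. This also shows that $\covgreedy$ on $\tree_n$ generates a genuine partial order: antisymmetry transfers from the Dyck path side, or follows directly because each slide-up strictly increases $\sum_i \mathrm{depth}(u_i)$. The step I expect to be most delicate is the factorization of $1\cdot A'$ and the proof that $w^\circ$ stops exactly before the descent from $v$.
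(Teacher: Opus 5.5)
Your proof is correct and follows the same route as the paper's: you reduce to matching cover relations under the contour bijection, identify $w^\circ$ with the contour of all subtrees of $v$ from $v''$ rightwards, and observe that the slide-up simply exchanges $w^\circ$ with the preceding $0$, which is the last traversal of $\{v,v'\}$. You also give more detail than the paper on why the longest Dyck prefix stops exactly after the last sibling subtree; the paper only asserts this point.
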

\begin{proof}
  For $T, T' \in \tree_n$, let $w, w'$ be their contour paths respectively. We will show that $T \covgreedy T'$ if and only if $w \covgreedy w'$. Indeed, let $c = (v, v', v'')$ be an inner corner of $T$. The largest Dyck subpath $w^\circ$ obtained in \Cref{defn:greedy-poset} from the valley of $w$ corresponding to $c$ is given by the part of the contour walk of $T$ on all the subtrees of $v$ after the inner corner $c$, which form the part of $T$ which is reattached to obtain $T'$ in the slide-up at $v''$. As the effect of sliding up on the contour path is exactly exchanging $w^\circ$ with the $0$ that precedes it, which corresponds to the second visit to the edge $(v,v")$, we have the full equivalence.
\end{proof}

Using \cref{prop:tree-greedy-poset}, we will take $(\tree_n, \leqgreedy)$ as the greedy Tamari poset hereinafter. Note that the maximal element in $(\tree_n, \leqgreedy)$ is the unique tree of size $n$ with only one leaf, which we call the \tdef{linear tree} of size $n$. We now introduce the main object of study.

\begin{defn} \label{defn:greedy-tamari-interval}
  A \tdef{greedy Tamari interval} of size $n$ is a pair $I = (T, T')$ with $T, T' \in \tree_n$ such that $T \leqgreedy T'$. We denote by $\interval_n$ the set of greedy Tamari intervals in $(\tree_n, \leqgreedy)$. We define the \tdef{first ascent} (resp. \tdef{ascent profile}) of $I$ to be that of $T'$, \emph{i.e.}, $\firstasc(I) := \firstasc(T')$ (resp. $\alf(I) := \alf(T')$).
\end{defn}

Given a greedy Tamari interval $I = (T, T') \in \interval_n$, we define its weight as follows. Suppose that $\alf(T) = (a_1, \ldots, a_r)$, then we take
\begin{equation}
  \label{eq:interval-weight}
  \intweight(I) = x^{\firstasc(I)} \prod_{i = 2}^r p_{a_i}.
\end{equation}
The generating function $F_{\interval}(t, x) \equiv F_{\interval}(t, x; p_1, p_2, \ldots)$ is then defined by
\begin{equation}
  \label{eq:interval-gf}
  F_{\interval}(t, x) = \sum_{n \geq 0} t^n \sum_{I \in \interval_n} \intweight(I).
\end{equation}

\subsection{Rooted bipartite planar maps and their recursive decomposition} \label{sec:bipartite-map}

We now introduce the other side of our recursive bijection. A \tdef{planar map} is a drawing of a graph on the plane, defined up to continuous deformation, such that edges only intersect at vertices. The drawn edges cut the plane into \tdef{faces}, with a unique unbounded face called the \tdef{outer face}, while the others are the \tdef{inner faces}. We consider here only \tdef{rooted} planar maps, where one of the edges of the outer face is oriented clockwise. A \tdef{bipartite planar map} is a rooted planar map whose vertices are colored black and white such that each edge is adjacent to one vertex of each color, and with the root always oriented from a black vertex. See \Cref{fig:bipartite} for an example of a bipartite planar map. Denote by $\map_n$ the set of bipartite planar maps with $n$ edges.

\begin{figure}
  \centering
  \insertfig{12}{0.3}
  \caption{Example of a (rooted) bipartite planar map}
  \label{fig:bipartite}
\end{figure}

Given a bipartite planar map $M$, we denote by $\outdeg(M)$ the half-degree of its outer face and by $\deg(f)$ the half-degree of a given inner face $f$. A generating function for bipartite planar maps $F_{\map}(t, x) \equiv F_{\map}(t, x; p_1, p_2, \ldots)$ is defined by
\begin{equation}
  \label{eq:map-gf}
  F_{\map}(t, x) = \sum_{n \geq 0} t^n \sum_{M \in \map_n} x^{\outdeg(M)} \prod_{f\text{ inner face of }M} p_{\deg(f)}.
\end{equation}

Tutte \cite{tutte-slicing} showed that 
\begin{equation}
  \label{eq:interval-decomp}
  F_{\map}(t, x) = 1 + xt F_{\map}(t, x)^2 + xt \Omega F_{\map}(t,x).
\end{equation}
Here, $\Omega$ is a linear operator in the space of polynomials in $x$, extended naturally to the space of power series in $t$ with coefficients polynomial in $x$, defined by
\begin{equation}
  \label{eq:omega-def}
  \forall i \geq 0,\quad \Omega x^i = \sum_{k=1}^i x^{i - k} p_k.
\end{equation}
\Cref{eq:interval-decomp}, given in \cite{tutte-slicing} by the name of ``even slicing'', can be explained combinatorially as follows. Taking out the edge next to the root in clockwise order of a non-empty bipartite planar map, there are two cases (see \cref{fig:bip-decomp}):
\begin{itemize}
\item The map breaks into two, in which case we root the two pieces at the appropriate edge adjacent to the vertices adjacent to the original root. To reconstruct a map in this case, we only need to take any pair of bipartite planar maps, add an edge from the black vertex of the root of the first component to the white vertex of the root of the second, which becomes the root.
\item The map remains connected then, by planarity, the deleted edge must separate an inner face and the outer face, thus the deletion adds the degree of the inner face to that of the outer face. To reconstruct a map in this case, we only need to take a bipartite planar map, then draw a new edge from the root to a white vertex to form a new inner face whose degree ``comes from'' that of the original outer face. The operator $\Omega$ describes every way to do that.
\end{itemize}
In both cases, an extra factor $xt$ accounts for the fact that there is a new edge on the outer face. This implies \cref{eq:interval-decomp}.

\begin{figure}
  \centering
  \insertfig{19}{1}
  \caption{Decomposition of bipartite planar maps}
  \label{fig:bip-decomp}
\end{figure}

We may consider \cref{eq:interval-decomp} as a generalization of the equation for the generating function of Catalan numbers, where the last term  is absent:
\[
  F_{\mathrm{Cat}}(t, x) = 1 + xt F_{\mathrm{Cat}}(t, x)^2.
\]
In this case, the terms in $F_{\mathrm{Cat}}$ are all of the form $(tx)^n$, and we can see from the decomposition above that they count rooted planar bipartite maps with no inner face, thus rooted plane trees, well-known to be counted by Catalan numbers.

Our goal is to give a recursive decomposition of greedy Tamari intervals isomorphic to that of bipartite planar maps illustrated in \Cref{fig:bip-decomp}, which will lead to a recursive bijection between greedy Tamari intervals and bipartite planar maps.

\section{Recursive decomposition of greedy Tamari intervals} \label{sec:decomp}

\subsection{Properties of greedy Tamari intervals} \label{sec:greedy-property}

We list a number of useful properties of the greedy Tamari intervals. We start with an important coalescence property of the greedy Tamari order.

\begin{lemma} \label{lem:alf-cover}
  For $T, T' \in \tree_n$ with $T \covgreedy T'$, either $\alf(T) = \alf(T')$, or $\alf(T')$ is obtained from $\alf(T)$ by merging two consecutive ascents, \emph{i.e.},
  \[
    \alf(T) = (a_1, \ldots, a_r),\quad \alf(T') = (a_1, \ldots, a_{s-1}, a_s + a_{s + 1}, a_{s + 2}, \ldots, a_r),
  \]
  for some $1 \leq s < r$. The second case occurs exactly when the edge along which the slide-up occurs is a leaf.
\end{lemma}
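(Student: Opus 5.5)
I will work on the Dyck path side and use \cref{prop:tree-greedy-poset}. Since contour paths turn the ascents of a tree into the ascents of its path, and $\alf$ lists ascent lengths in order, it suffices to compare the maximal runs of $1$'s in $w = A \cdot 0 \cdot w^\circ \cdot A''$ and in $w' = A \cdot w^\circ \cdot 0 \cdot A''$ from \Cref{defn:greedy-poset}. Here $w^\circ$ is a nonempty Dyck path. It begins with $1$, namely the $1$ of the marked valley, and ends with $0$.

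The key steps are as follows.

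\textbf{Runs internal to a block.} Runs of $1$'s lying entirely inside $A$, inside $w^\circ$ or inside $A''$ are unaffected by the rearrangement, except possibly at the junctions between blocks.

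\textbf{Junctions around $w^\circ$.}
\begin{itemize}
\item In $w$, the letter $0$ separates $A$ from $w^\circ$, so no run crosses that boundary.
\item Since $w^\circ$ ends with $0$, no run crosses the boundary between $w^\circ$ and $A''$ in $w$.
\item In $w'$, the boundary between $w^\circ$ and $0$, and the one between $0$ and $A''$, cannot be crossed by a run either.
\end{itemize}

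\textbf{The only new junction.} This is the junction between $A$ and $w^\circ$ in $w'$. If $A$ ends with $0$ (or is empty), nothing merges, and the runs of $w'$ are exactly those of $w$ in the same order, so $\alf(T)=\alf(T')$. If $A$ ends with $1$, the last run of $A$, say the $s$-th ascent of length $a_s$, is glued to the first run of $w^\circ$, which is the $(s+1)$-th ascent of length $a_{s+1}$ in $w$. All other runs keep their lengths and relative order, giving the merged profile in the statement with $1 \le s < r$.

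\textbf{Translation back to trees.} Finally, I identify the condition ``$A$ ends with $1$''. The $0$ that is moved is the last traversal of the edge $\{v,v'\}$, where $(v,v',v'')$ is the inner corner. It is immediately preceded by a $1$ exactly when $\{v,v'\}$ is traversed down right after being traversed up. This happens exactly when $v'$ has no children, i.e., when $\{v,v'\}$ is a leaf edge, which is the edge along which the subtrees slide up.

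The only point needing care is this last translation, together with checking that the moved $0$ really is the descent step of $\{v,v'\}$. That check is already contained in the proof of \cref{prop:tree-greedy-poset}. The rest is a bookkeeping of run boundaries.
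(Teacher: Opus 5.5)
Your proof is correct. It is essentially the paper's argument written in the contour-word encoding: the paper observes on the tree that only the ascent of the leftmost leaf $\bar v$ of the slid-up forest can change, and that this ascent merges with the ascent of $v'$ exactly when $v'$ is a leaf, which is precisely your new junction $A \mid w^\circ$ with $A$ ending in $1$. Your bookkeeping of run boundaries makes explicit the paper's assertion that no other ascent is affected.
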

\begin{proof}
  Suppose that $T'$ is obtained from $T$ by a slide-up at the inner corner $(v, v', v'')$. Observe that the only ascent that may be affected is that of the leftmost leaf $\bar{v}$ of the slid-up subtree. When $v'$ is not a leaf in $T$, ascents are not changed, thus $\alf(T) = \alf(T')$. When $v'$ is a leaf in $T$, the ascent of $v'$ and $\bar{v}$ are merged in $T'$, which gives the merging of components in $\alf(T')$, since $\bar{v}$ is the next leaf after $v'$ in the pre-order of $T$.
\end{proof}

\begin{cor} \label{cor:parent-fixation}
  Let $I = (T, T') \in \interval_n$ be a greedy Tamari interval, with $u_0, \ldots u_n$ (resp. $u'_0, \ldots u'_n$) the nodes of $T$ (resp. $T'$) in the pre-order. For any $i$, if $u_{i+1}$ is the leftmost child of $u_i$ in $T$, then $u'_{i+1}$ is also the leftmost child of $u'_i$ in $T'$; if $u'_i$ is a leaf in $T'$, then $u_i$ is also a leaf in $T$.
\end{cor}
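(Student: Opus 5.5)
The plan is to turn both statements into statements about the ascent profiles of $T$ and $T'$ and then apply \cref{lem:alf-cover} along a chain of covers. Recall from \cref{sec:plane-tree} that the ascents of a tree in $\tree_n$ partition the edge indices $\{1,\ldots,n\}$ into consecutive intervals. If the ascent profile is $(a_1,\ldots,a_r)$, these intervals are determined by the set of partial sums $B = \{a_1, a_1+a_2, \ldots, a_1+\cdots+a_r\}$. I will call these the \emph{ascent boundaries}.

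The first step is to translate the two local conditions into conditions on boundaries, for a single tree with nodes $u_0,\ldots,u_n$ in pre-order. For $i \geq 1$, the node $u_i$ is a leaf if and only if the contour walk turns back right after traversing $e_i$ upward. This means the $1$ recording $e_i$ is the last letter of its ascent in $w_T$, i.e.\ $i \in B$. Likewise, for $i \geq 1$, the node $u_{i+1}$ is the leftmost child of $u_i$ if and only if the first traversal of $e_{i+1}$ immediately follows that of $e_i$. This means $e_i$ and $e_{i+1}$ lie in the same ascent, i.e.\ $i \notin B$. The case $i=0$ needs separate handling. The node $u_1$ is always the leftmost child of the root $u_0$ when $n \geq 1$, and $u_0$ is a leaf only when $n = 0$. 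In both situations the claims for $i = 0$ are trivial.

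The second step is to compare boundaries. Since $T \leqgreedy T'$, there is a chain $T = T_0 \covgreedy T_1 \covgreedy \cdots \covgreedy T_k = T'$. By \cref{lem:alf-cover}, each cover either keeps the ascent profile or merges two consecutive ascents. Merging two consecutive ascents removes exactly one partial sum from the boundary set and keeps the others, so the boundary set can only shrink along a cover. By induction on $k$, the boundary set $B'$ of $T'$ is contained in the boundary set $B$ of $T$.

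Both claims now follow from $B' \subseteq B$. If $u_{i+1}$ is the leftmost child of $u_i$ in $T$, then $i \notin B$, hence $i \notin B'$, so $u'_{i+1}$ is the leftmost child of $u'_i$ in $T'$. If $u'_i$ is a leaf in $T'$, then $i \in B'$, hence $i \in B$, so $u_i$ is a leaf in $T$. The only delicate step is the dictionary between pre-order indices and ascent boundaries in the first step. It needs the observation that the $j$-th up step of $w_T$ corresponds to $e_j$, which was noted in \cref{sec:plane-tree}.
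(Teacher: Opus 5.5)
Your proof is correct and follows the paper's argument: the paper likewise reduces to a single cover $T \covgreedy T'$. It then combines \cref{lem:alf-cover} with the facts that $u_{i+1}$ is the leftmost child of $u_i$ iff both lie in the same ascent, and that $u_i$ is a leaf iff it ends its ascent; your boundary-set inclusion $B' \subseteq B$ only makes this bookkeeping and the induction along the chain explicit.
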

\begin{proof}
  We only need to prove the claim  for $T \covgreedy T'$. This follows from \Cref{lem:alf-cover} and the fact that, in a tree, $u_{i+1}$ is the leftmost child of $u_i$ if and only if they belong to the same ascent and $u_i$ is a leaf if and only if it is the last node in its ascent.
\end{proof}

\begin{defn} \label{defn:tree-portion}
  Given a tree $T\in \tree_n$  and $1 \leq k \leq n$, the \tdef{$k$-portion} of $T$, denoted by $\portion(T, k)$, is the tree induced by $T$ on the the first $k + 1$ nodes $u_0, u_1, \ldots, u_k$ in pre-order.
\end{defn}
Observe that for $l \leq k$ one has $\portion(\portion(T, k),l)=\portion(T, l)$.
The following is immediate.
\begin{lemma} \label{lem:portion-invariant}
  Let $T, T' \in \tree_n$ be such that $T \covgreedy^{(i)} T'$. Then for any $k$, one has $\portion(T, k) \covgreedy^{(i)} \portion(T', k)$ if $k\geq i$ and   $\portion(T, k) = \portion(T', k)$ if $k<i$.
\end{lemma}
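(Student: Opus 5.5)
We argue directly from the definition of the slide-up together with the pre-order structure. Let $T \covgreedy^{(i)} T'$ be the slide-up at $v'' = u_i$. Let $v$ be its parent and $v'$ the child of $v$ just before $v''$. The operation only detaches and reattaches the subtrees rooted at $u_i$ and at the later children of $v$, so we first record how pre-order indices behave. The nodes visited before $u_i$ in $T$ are $u_0,\dots,u_{i-1}$. These are the nodes outside the slid subtrees and outside the subtrees of later siblings, together with the subtree of $v'$, which ends at $u_{i-1}$. In $T'$ the slid subtrees are attached to $v'$ as its rightmost children, so they are visited immediately after the old subtree of $v'$. Hence the pre-order of $T'$ lists exactly the same nodes in exactly the same order as in $T$. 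This is the key observation: the slide-up preserves pre-order labels, and only the parent of $u_i$ and of the later siblings of $u_i$ change. Equivalently, on contour paths (\Cref{prop:tree-greedy-poset}) the letter $0$ is moved past $w^\circ$, which does not change the order of the $1$'s.

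For $k<i$, the nodes $u_0,\dots,u_k$ and the parent of each $u_j$ with $1\le j\le k$ are the same in $T$ and $T'$. Indeed, the only nodes whose parent changes are $u_i$ and the later children of $v$, and all of these have index $\ge i$. The induced trees therefore coincide, giving $\portion(T,k)=\portion(T',k)$.

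For $k\ge i$, we check that $\portion(T',k)$ is obtained from $\portion(T,k)$ by the slide-up at $u_i$. In $\portion(T,k)$, the node $u_i$ is still a child of $v$. It is not the leftmost child, because $v'$ precedes it and has smaller index, so the slide-up there is defined. Its preceding sibling is still $v'$. Its later siblings in $\portion(T,k)$ are exactly those later children of $v$ in $T$ whose index is $\le k$. Because the node set $\{u_0,\dots,u_k\}$ is closed under taking ancestors, the truncated subtrees are precisely the truncations of the slid subtrees. Sliding them onto $v'$ gives exactly the tree induced by $T'$ on $\{u_0,\dots,u_k\}$, since parent relations agree by the description above. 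Moreover, $u_i$ remains the $i$-th node in pre-order of the portion, so the cover is $\covgreedy^{(i)}$.

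The main obstacle is the pre-order preservation claim; everything else is bookkeeping. We would justify it carefully by noting that in $T$ the subtree of $v'$ occupies an interval of indices ending at $i-1$, and that the slid subtrees occupy the next consecutive indices.
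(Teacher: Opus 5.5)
Your argument is correct. It is exactly the reasoning the paper leaves implicit when it calls this lemma immediate: a slide-up moves a single $0$ of the contour path past $w^\circ$, so the pre-order is unchanged and only the parents of $u_i$ and its later siblings (all of index $\ge i$) change, and both cases follow. One small slip: $\{u_0,\dots,u_{i-1}\}$ is not ``the nodes outside the slid subtrees together with the subtree of $v'$'', since nodes to the right of the subtree of $v$ are also outside the slid subtrees. Your contour-path remark already proves the pre-order claim, so nothing depends on it.
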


Next we consider conditions under which one may switch the order of slide-ups in a chain in $(\tree_n, \leqgreedy)$.

\begin{lemma} \label{lem:switch-order}
Let $T, T', T'' \in \tree_n$ with $T \covgreedy^{(i)} T' \covgreedy^{(j)} T''$. If $i > j$, then the $j$-th node in $T$ is not the leftmost child of its parent, and the slide-up of this node leads to a new tree $T^*$ such that $T \covgreedy^{(j)} T^* \leqgreedy T''$.
\end{lemma}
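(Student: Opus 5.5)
The key observation is that a slide-up does not change the pre-order labelling of the nodes. Let $v''$ be the slid node and $v'$ its preceding sibling. In the contour walk, exchanging $w^\circ$ with the preceding $0$ (\cref{prop:tree-greedy-poset}) does not change the order of first visits. So the detached subtrees are attached to $v'$ as its last children, and every node keeps its index; only the parents of $v''$ and of its right siblings change. We therefore identify the nodes of $T$, $T'$, $T''$ and $T^*$ by their pre-order indices $u_0,\dots,u_n$.

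\emph{First claim.} Node $u_j$ is the leftmost child of its parent exactly when its parent is $u_{j-1}$, and this is a property of $\portion(\cdot,j)$. Since $j<i$, \cref{lem:portion-invariant} gives $\portion(T,j)=\portion(T',j)$. As $u_j$ is not the leftmost child in $T'$ (a slide-up at $u_j$ is performed there), it is not the leftmost child in $T$ either. Hence $T^*$ with $T\covgreedy^{(j)}T^*$ is well defined.

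\emph{Second claim.} We aim to show that $T^*\covgreedy^{(i)}T''$, that is, the two slide-ups essentially commute. Let $v$ be the parent of $u_j$ and $w$ the sibling preceding $u_j$ in $T$; by the index-preservation remark, these are also its parent and preceding sibling in $T'$. Since $i>j$, the node $u_i$ lies in one of three places:
\begin{enumerate}
\item outside the subtree of $v$, or strictly deeper inside one of the subtrees hanging from $u_j$ or its right siblings;
\item a right sibling of $u_j$ (a child of $v$);
\item \emph{(impossible)} a child of $v$ before $u_j$, which would force $i<j$.
\end{enumerate}

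In case (1), the parent of $u_i$ and its preceding sibling are the same in $T$ and in $T^*$. The subtrees moved by the two operations are either disjoint or nested rigidly, so performing the slide-ups in either order yields the same tree. One checks this by listing the new parent of each child of $v$ and of each child of $\operatorname{parent}(u_i)$.

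The main case is (2). Let $u_{i'}$ be the sibling immediately preceding $u_i$ in $T$. Then $u_{i'}$ is a child of $v$ with $j\le i'<i$.
\begin{itemize}
\item In $T'$, the children of $v$ from $u_i$ onwards hang below $u_{i'}$, and $v$ keeps the children up to $u_{i'}$. The slide-up at $u_j$ in $T'$ then moves $u_j,\dots,u_{i'}$ (with $u_{i'}$ carrying its new subtrees) to the end of $w$'s children.
\item In $T^*$, all children of $v$ from $u_j$ onwards, including $u_{i'}$ and $u_i$, become the last children of $w$, in the same order. So $u_i$ is still not a leftmost child and $u_{i'}$ still precedes it. Sliding up at $u_i$ in $T^*$ moves $u_i$ and its right siblings below $u_{i'}$.
\end{itemize}
Both procedures give the same parent for every node: $w$ for the children of $v$ from $u_j$ to $u_{i'}$, $u_{i'}$ for the children of $v$ from $u_i$ onwards, and unchanged elsewhere. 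Hence $T^*\covgreedy^{(i)}T''$, and so $T^*\leqgreedy T''$.

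I expect the main obstacle to be this parent bookkeeping in case (2), together with making the index-preservation claim rigorous. I would handle both via the contour-word description of \cref{defn:greedy-poset}, where the parent of $u_k$ is read off from the contour word.
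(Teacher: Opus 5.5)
Your first claim is correct. Deriving it from \cref{lem:portion-invariant} instead of \cref{cor:parent-fixation} works, and so does your observation that slide-ups preserve pre-order indices. Your computation in case (2) is also right.

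\textbf{The gap.} You aim to prove $T^*\covgreedy^{(i)}T''$, but that stronger claim is false in general. The failing configuration lies inside your case (1): $u_i$ is the sibling immediately to the right of $v$, the parent of $u_j$.
\begin{itemize}
\item In $T$, the slide-up at $u_i$ reattaches $u_i$ and all its right siblings as the \emph{last children of $v$}. They thus become new right siblings of $u_j$, and the subsequent slide-up at $u_j$ in $T'$ carries them under $w$.
\item In the other order, the slide-up at $u_j$ in $T$ does not touch them. In $T^*$ the preceding sibling of $u_i$ is still $v$, so one slide-up at $u_i$ only places them under $v$ (after $w$), not under $w$.
\end{itemize}
So your assertion that the moved subtrees are ``disjoint or nested rigidly'' fails here: the two operations interact through the children of $v$.

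\textbf{Counterexample.} Let $T$ have root $u_0$ with children $u_1,u_4$, where $u_1$ has children $u_2,u_3$, and take $i=4$, $j=3$. Then $w_T=11010010$, $w_{T'}=11010100$, $w_{T''}=11101000$ and $w_{T^*}=11100010$. The path $w_{T^*}$ has a single valley, so the only tree $R$ with $T^*\covgreedy R$ has contour $11100100\neq w_{T''}$.

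\textbf{The missing step.} This is exactly the non-trivial case singled out in the paper. From $T^*$, perform the slide-up at $u_i$ \emph{twice}. After the first, $u_i$ and its right siblings follow $w$ among the children of $v$, so the preceding sibling of $u_i$ becomes $w$. The second slide-up then moves them under $w$, producing $T''$. This gives $T^*\leqgreedy T''$ through a chain of length two, which is why the lemma asserts only $\leqgreedy$.

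You should also check that this is the only obstruction. If $u_i$ lies outside the subtree of $v$ and its preceding sibling is not $v$, the forest moved at $u_i$ is attached to a node other than $v$ and $w$. The two slide-ups then genuinely commute, and your argument goes through there.
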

\begin{proof}
  Denote by $u_i$ (resp. $u'_i$) the $i$-th node of $T$ (resp. $T'$) in the pre-order. Since $u'_j$ in $T'$ is not the leftmost child of its parent, by \cref{cor:parent-fixation}, $u_j$ is not the leftmost child of its parent either, meaning that we can indeed perform the slide-up operation at $u_j$ in $T$. This gives the existence of $T^*$.

  Now we show that $T^* \leqgreedy T''$. Observe that, if the subtree to be slid up at the corner just before $u_j$ in $T$ remains the same after sliding up the one for $u_i$ and \textit{vice versa}, then the two slide-ups commutes, and we have $T^* \leqgreedy T''$. Since $i > j$, there are two possibilities where we are not in such a case.
  \begin{itemize}
  \item The nodes $u_i$ and $u_j$ are siblings. In this case, we check easily that the two slide-up operations still commute, thus $T^* \covgreedy^{(i)} T''$.
  \item The node $u_i$ is the next sibling of the parent of $u_j$. In this case, as illustrated in \cref{fig:greedy-commute}, we can obtain $T''$ from $T^*$ by performing twice the slide-up operation at the corner just before the $i$-th node. We thus also have $T^* \leqgreedy T''$.
  \end{itemize}
  We thus conclude that we always have $T^* \leqgreedy T''$.
\end{proof}

\begin{figure}
  \centering
  \insertfig{2}{1}
  \caption{The only non-trivial commutation of slide-up operations for cover relations in the greedy Tamari poset}
  \label{fig:greedy-commute}
\end{figure}

Given $I = (T, T') \in \interval_n$, \cref{lem:switch-order} provides a way to obtain a chain of maximal length from $T$ to $T'$ from any given chain.

\begin{cor} \label{cor:canonical-chain}
  Given $T, T' \in \tree_n$ with $T \leqgreedy T'$, there is a unique chain $C$ of maximal length
  \[
    T = T_0 \covgreedy^{(c_0)} T_1 \covgreedy^{(c_1)} \cdots \covgreedy^{(c_{k-1})} T_k = T'
  \]
  with $c_i$ nondecreasing. It is also the smallest in lexicographical order of the sequence $(c_0, \ldots, c_{k-1})$ among all chains from $T$ to $T'$.
\end{cor}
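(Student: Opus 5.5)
We argue by an exchange procedure based on \cref{lem:switch-order}, and separately show that a chain with nondecreasing indices is forced. Since $\tree_n$ is finite, chains from $T$ to $T'$ have bounded length, so there is at least one chain of maximal length. Among all maximal-length chains, pick the one whose index sequence $(c_0,\ldots,c_{k-1})$ is lexicographically smallest.

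\textbf{Step 1: this chain is nondecreasing.} Suppose $c_l > c_{l+1}$ for some $l$. Apply \cref{lem:switch-order} to $T_l \covgreedy^{(c_l)} T_{l+1} \covgreedy^{(c_{l+1})} T_{l+2}$. We get $T_l \covgreedy^{(c_{l+1})} T^* \leqgreedy T_{l+2}$. Choose any chain from $T^*$ to $T_{l+2}$; it has length at least $1$, since $T^*\neq T_{l+2}$ by comparing the $c_{l+1}$-portions. Substituting it into the original chain gives a chain from $T$ to $T'$ whose length is at least the original length. By maximality the length is equal. This new chain agrees with the old one before position $l$ and has index $c_{l+1}<c_l$ at position $l$, so it is lexicographically smaller. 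This contradicts the choice, so the chosen chain is nondecreasing.

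\textbf{Step 2: a nondecreasing chain is unique.} We use two facts about a slide-up $\covgreedy^{(i)}$.
\begin{enumerate}
\item By \cref{lem:portion-invariant}, the portion $\portion(\cdot,k)$ is unchanged for every $k<i$.
\item The node $u_i$ is reattached from $v$ to its child $v'$, so its depth increases by exactly one.
\end{enumerate}
Fact 2 needs one observation: the nodes with index at least $i$ that are moved keep their relative order, so the node in position $i$ of the new tree is the node just moved. Now let $d$ be the smallest $k$ with $\portion(T,k)\neq\portion(T',k)$. Take any chain from $T$ to $T'$ with minimal index $m$. By fact 1, $\portion(\cdot,k)$ is constant along the chain for $k<m$, so $m\le d$. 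Also, along the chain the depth of the $m$-th node never decreases and strictly increases at each step with index $m$. Hence $\portion(T',m)\neq\portion(T,m)$, so $m\ge d$, and therefore $m=d$.

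For a nondecreasing chain, the first index is the minimum, so $c_0=d$. This determines $T_1$ as the slide-up of $T$ at $u_d$. Induction on the length of the chain from $T_1$ to $T'$ then shows that the nondecreasing chain is unique. Combined with Step 1, this unique nondecreasing chain is the lexicographically minimal maximal-length chain, and in particular it has maximal length.

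\textbf{Step 3: it is lexicographically smallest among all chains.} The argument of Step 2 shows that every chain from $T$ to $T'$ has all indices at least $d$. So every chain has first index at least $d$, while our chain has first index exactly $d$. Any chain starting with index $d$ passes through the same $T_1$, and we can induct on the chain from $T_1$ to $T'$.

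\textbf{Main obstacle.} The delicate point is fact 2, the depth monotonicity of the $i$-th node in pre-order under slide-ups at index $i$. It relies on checking carefully that after the slide-up the node occupying position $i$ is still the node just moved, and this should be verified against \cref{defn:greedy-poset-tree}.
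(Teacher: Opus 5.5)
There is one real flaw: the justification of $T^*\neq T_{l+2}$ in Step 1. You need this claim so that the substituted chain is not one step shorter, but comparing $c_{l+1}$-portions cannot establish it, because those portions always coincide. Since $c_{l+1}<c_l$, \cref{lem:portion-invariant} gives $\portion(T_{l+1},c_{l+1})=\portion(T_l,c_{l+1})$. Both $\portion(T_{l+2},c_{l+1})$ and $\portion(T^*,c_{l+1})$ are then obtained from this common tree by the same slide-up at index $c_{l+1}$. Concretely, let $T_l$ be a root with three leaf children, $c_l=3$, $c_{l+1}=2$. Then $T_{l+2}$ is the linear tree, while in $T^*$ both $u_2$ and $u_3$ are children of $u_1$, and the two $2$-portions are equal.

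The claim itself is true and easy to repair. The quickest fix is to use that $\covgreedy$ is the cover relation of $\leqgreedy$, as stated in the definition: $T_l\covgreedy T^*$ leaves no room for $T_{l+1}$ strictly between $T_l$ and $T^*$. Alternatively, compare the $c_l$-portions instead:
\begin{itemize}
\item The step $T_l\to T_{l+1}$ raises $u_{c_l}$ by one level.
\item A slide-up at $u_{c_{l+1}}$ raises exactly the labels from $c_{l+1}$ up to the last descendant of the parent of $u_{c_{l+1}}$.
\item That parent is the same in $T_l$ and $T_{l+1}$, and its descendant set can only have grown.
\end{itemize}
Hence $u_{c_l}$ is strictly deeper in $T_{l+2}$ than in $T^*$. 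This uses that slide-ups preserve the whole pre-order labelling: on the contour word a slide-up only moves one letter $0$ to the right past $w^\circ$. The same observation also settles the point you flagged as the main obstacle.

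With that repaired, your proof is correct. Step 1 is the paper's exchange argument via \cref{lem:switch-order}, recast as an extremal choice (the lexicographically smallest maximal-length chain). This makes termination cleaner than the paper's appeal to bounded chain length. Steps 2 and 3 rest on the same ingredient as the paper, \cref{lem:portion-invariant}, but are organised differently. The paper compares two nondecreasing chains at their first discrepancy; you instead show that the minimal index of \emph{every} chain from $T$ to $T'$ equals the invariant $d$. Through the depth of $u_m$, you make explicit the strictness the paper uses tacitly when it asserts $\portion(T_{j-1},c_j)\neq\portion(T',c_j)$. This gives a direct proof of lexicographic minimality among all chains, rather than re-running the exchange procedure from an arbitrary chain. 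For the two inductions, induct on a quantity such as the maximal length of a chain from $T$ to $T'$, since the chains being compared may a priori have different lengths.
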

\begin{proof}
  For the existence of such a chain, it suffices to start from any chain $C'$ of maximal length from $T$ to $T'$ and apply successively \cref{lem:switch-order} until the sequence of the $c_i$'s is nondecreasing. Each such commutation makes the sequence of the $c_i$'s smaller lexicographically and possibly longer. However, as $C'$ is already of maximal length, the length stays the same at each commutation. As $(\tree, \leqgreedy)$ is a finite poset with a maximal element, the length of the chain is bounded, thus eventually \cref{lem:switch-order} will cease to be applicable, and we obtain a chain satisfying our conditions.

  To show that the chain is unique, suppose that there are two different chains satisfying our conditions, represented by the sequences $(c_i)_i$ and $(c'_i)_i$ of the inner corners used from $T$ to $T'$. Let $j$ be the smallest index with $c_j \neq c'_j$, and we may suppose that $c_j < c'_j$. By successive application of \cref{lem:portion-invariant} on $(c'_i)_{i \geq j}$, which is nondecreasing, we have $\portion(T_{j-1}, c_j) = \portion(T', c_j)$. However, the application of the same lemma on $(c_i)_{i \geq j}$ gives $\portion(T_{j-1}, c_j) \neq \portion(T', c_j)$, which is a contradiction. We thus conclude that the chain $C$ is unique, which also implies its minimality in lexicographical order, as one may start from any chain $C'$ from $T$ to $T'$ and apply \Cref{lem:switch-order} successively to obtain $C$, each step decreasing in lexicographical order.
\end{proof}

\begin{rem}
  Consider the labeling of edges in the cover relation $\covgreedy$ given by taking the label $i$ for all edges of the form $T \covgreedy^{(i)} T'$. By completing $(\tree_n, \leqgreedy)$ with a minimal element $\hat{0}$ with all cover edges labeled $0$, \Cref{cor:canonical-chain} implies that such a labeling is an EL-labeling, which ensures shellability of the corresponding order complex \cite{el-labeling}. We have also studied other aspects of the order structure of the greedy Tamari poset, which will be detailed in a forthcoming article.
\end{rem}

Given an interval $I \in \interval_n$, we define the \tdef{canonical chain} of $I$ to be the chain described in \cref{cor:canonical-chain}. In view of \cref{lem:portion-invariant,cor:canonical-chain}, we give the following construction that provides a special element in the canonical chain of $I$.

\begin{defn} \label{defn:straightening}
  Given a tree $T \in \tree_n$ and some $1 \leq k \leq n - 1$, the \tdef{$k$-straightening} of $T$, denoted by $\straight_k(T)$, is the tree  obtained as follows: starting from $T$, for each $i$ from $1$ to $k$, we perform repeatedly slide-ups at the $i$-th node of the current tree until no longer possible (it is possible that no slide-up is performed for a given $i$).
\end{defn}

Observe that $T = \straight_k(T)$ if and only if the first ascent of $T$ has length at least $k$, and every $\straight_k(T)$ is on the canonical chain from $T$ to the linear tree of the same size. Actually, one can be more precise, as the following proposition shows.

\begin{prop} \label{prop:straight}
  For a tree $T\in \tree_n$ and $k \leq n$, the tree $\straight_k(T)$ is on the canonical chain from $T$ to every $T^*$ with $T \leqgreedy T^*$ and $\firstasc(T^*) \geq k$.
\end{prop}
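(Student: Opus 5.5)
We work with the canonical chain $T = T_0 \covgreedy^{(c_0)} T_1 \covgreedy^{(c_1)} \cdots \covgreedy^{(c_{m-1})} T_m = T^*$ from \cref{cor:canonical-chain}, where $c_0 \leq c_1 \leq \cdots$ is nondecreasing. Let $\ell$ be the last index with $c_{\ell-1} \leq k$, so that all later slide-ups happen at positions $> k$. The goal is to show $T_\ell = \straight_k(T)$, which puts $\straight_k(T)$ on the chain.

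\emph{Step 1: the prefix has the right shape.} By \cref{lem:portion-invariant}, slide-ups at positions $> k$ do not change the $k$-portion. Hence $\portion(T_\ell, k) = \portion(T^*, k)$. Since $\firstasc(T^*) \geq k$, this $k$-portion is the linear tree of size $k$. So in $T_\ell$ each $u_{i}$ with $i \leq k$ is the leftmost child of $u_{i-1}$, and no further slide-up at positions $1,\dots,k$ is possible in $T_\ell$.

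\emph{Step 2: the prefix of the chain agrees with the greedy procedure.} The steps of the chain up to $T_\ell$ form blocks of slide-ups at position $1$, then at position $2$, and so on up to $k$, in increasing order. This is also the shape of the procedure in \cref{defn:straightening}. What remains is to show that for each $i \leq k$, the chain performs slide-ups at $i$ exactly until this is no longer possible. Induct on $i$, and suppose the tree $S$ reached before the block at position $i$ coincides with the intermediate tree of the straightening procedure. After the block we are at some tree $S'$, and all subsequent slide-ups are at positions $> i$. By \cref{lem:portion-invariant}, $\portion(S', i) = \portion(T_\ell, i)$, which is linear. So in $S'$ the node $u_i$ is the leftmost child of its parent, and no more slide-up at $i$ is possible.

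It must also be checked that the block did not stop early and could not have continued. A slide-up at $i$ is possible if and only if $u_i$ is not a leftmost child. Once $u_i$ becomes a leftmost child, the block must stop. Conversely, the block cannot stop while $u_i$ is still not leftmost, by the portion argument above. The block is therefore precisely the maximal run of slide-ups at $i$. This also uses that the repeated slide-ups at position $i$ in the procedure are deterministic, since there is a unique operation at node $u_i$.

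\emph{Step 3: conclude.} After position $k$ the straightening procedure stops, so $T_\ell = \straight_k(T)$ lies on the canonical chain. The main obstacle is justifying Step 2 carefully. One point is that the node indices of a tree are relabeled after each slide-up. The other is that \cref{lem:portion-invariant}, combined with monotonicity of the $c_j$, really forces the $i$-portion of every tree after the block to be fixed. Both are handled by the portion lemma applied iteratively along the tail of the chain.
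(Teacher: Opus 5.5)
Your proof is correct, but it takes a different route from the paper.

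\textbf{The paper's route.} The paper argues by comparison with the canonical chain $C$ from $T$ to the linear tree $T_{\max}$. This chain contains $\straight_k(T)$, since the full straightening sequence has nondecreasing labels. The paper takes the last tree $T'$ common to $C$ and $C^*$ and compares the labels $i$ (in $C$) and $j$ (in $C^*$) of the two steps leaving $T'$. Applying \cref{lem:portion-invariant} along $C^*$, it gets $\portion(T^*,i)=\portion(T',i)$, hence $i>\firstasc(T^*)\ge k$. It concludes that $T'$ lies above $\straight_k(T)$ in $C$. Uniqueness of nondecreasing chains from $T$ to $T'$ then puts $\straight_k(T)$ in $C^*$. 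The inequality printed there as $i>j$ has to be read as $i<j$ for the portion argument along $C^*$ to work.

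\textbf{Your route.} You stay inside $C^*$ and show, block by block, that the steps with label at most $k$ are exactly the maximal runs of slide-ups in \cref{defn:straightening}. You use only three facts: the labels are nondecreasing, \cref{lem:portion-invariant} holds, and a slide-up at a given node is a single deterministic operation.

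\textbf{Comparison.} Your argument is more elementary. It needs neither the uniqueness part of \cref{cor:canonical-chain} nor the auxiliary chain to $T_{\max}$. It also gives a sharper statement: $\straight_k(T)$ is precisely the tree reached after the last step of label at most $k$ in $C^*$. The paper's argument is shorter once \cref{cor:canonical-chain} is available.

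\textbf{Relabeling.} The concern you raise in Step 3 does not arise. A slide-up only swaps $w^\circ$ with the $0$ preceding it in the contour word, so the order of the letters $1$ is unchanged. The $j$-th node in pre-order therefore stays the $j$-th node, which is why \cref{lem:portion-invariant} can be applied with fixed indices along the chain.
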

\begin{proof}
  Let $T_{\max}$ be the linear tree of size $n$, and $C$ (resp. $C^*$) the canonical chain from $T$ to $T_{\max}$ (resp. $T^*$). By \Cref{defn:straightening}, we have $\straight_k(T) \in C$. Let $T'$ be the largest tree in $C^* \cap C$, and $T_1$ (resp. $T_2$) its successor in $C$ (resp. $C^*$) with $T' \covgreedy^{(i)} T_1$ and $T' \covgreedy^{(j)} T_2$. By \Cref{cor:canonical-chain} on $C$, we have $i > j$, and combining \Cref{cor:canonical-chain,lem:portion-invariant} for trees after $T_2$ in $C^*$, we have $\portion(T^*, i) = \portion(T', i)$. However, as we have $T' \covgreedy^{(i)} T_1$, we know that $\portion(T', i)$ is not a linear tree, thus $k \leq \firstasc(T^*) < i$. Hence, $T'$ is above $\straight_k(T)$ in $C$ by \Cref{defn:straightening,cor:canonical-chain}, implying $\straight_k(T) \in C^*$. 
\end{proof}

\begin{defn} \label{defn:midpoint}
 Let $I = (T, T') \in \interval_n$, the \tdef{midpoint tree} of $I$, denoted by $\midtree(I)$, is the $k$-straightening of $T$ with $k = \firstasc(I)$, \emph{i.e.}, $\midtree(I) := \straight_k(T)$. See \cref{fig:midpoint-tree} for an example of the midpoint tree of a greedy Tamari interval.
\end{defn}

\begin{figure}
  \centering
  \insertfig{5}{0.6}
  \caption{Example of the midpoint tree of a greedy Tamari interval $I = (T, T')$, with backbone and side forest distinguished by colors}
  \label{fig:midpoint-tree}
\end{figure}

\begin{prop} \label{prop:midpoint-tree}
  For an interval $I = (T, T') \in \interval_n$, its midpoint tree $\midtree(I)$ is on its canonical chain and satisfies $\firstasc(\midtree(I))=\firstasc(I)$. Furthermore, the canonical chain from $\midtree(I)$ to $T'$ is a subchain of that of $I$.
\end{prop}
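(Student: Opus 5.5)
Let $k = \firstasc(I) = \firstasc(T')$. The first claim follows directly from \cref{prop:straight}. Taking $T^* = T'$, which satisfies $T \leqgreedy T'$ and $\firstasc(T') \geq k$, we see that $\midtree(I) = \straight_k(T)$ lies on the canonical chain $C$ of $I$. Once this is known, the last claim follows from the uniqueness part of \cref{cor:canonical-chain}. The portion of $C$ from $\midtree(I)$ to $T'$ is a chain with nondecreasing labels $c_i$. It also has maximal length between its endpoints: otherwise a longer chain from $\midtree(I)$ to $T'$ could be spliced into $C$, contradicting the maximality of $C$. Hence this portion is the canonical chain from $\midtree(I)$ to $T'$.

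It remains to show $\firstasc(\midtree(I)) = k$. We prove the two inequalities separately.

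For $\firstasc(\midtree(I)) \geq k$, we show by induction on $i \leq k$ that after the $i$-th stage of \cref{defn:straightening} the portion $\portion(\cdot, i)$ is the linear tree. At stage $i$, we slide up repeatedly at the $i$-th node until it becomes the leftmost child of its parent. By induction, $u_0, \ldots, u_{i-1}$ already form a path with $u_{j+1}$ the leftmost child of $u_j$, and slide-ups at node $i$ leave $\portion(\cdot, i-1)$ unchanged by \cref{lem:portion-invariant}. The parent of $u_i$ lies on this path. If it is not $u_{i-1}$, a further slide-up is possible, since $u_i$ is not a leftmost child. Stopping therefore forces $u_i$ to be the leftmost child of $u_{i-1}$. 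We must also check that $k \leq n-1$ or handle $k = n$, where $T'$ is the linear tree and the argument is the same.

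For $\firstasc(\midtree(I)) \leq k$, we use \cref{cor:parent-fixation} along the subchain from $\midtree(I)$ to $T'$. If $\firstasc(\midtree(I)) > k$, then $u_{k+1}$ is the leftmost child of $u_k$ in $\midtree(I)$, so $u'_{k+1}$ would be the leftmost child of $u'_k$ in $T'$. This gives $\firstasc(T') > k$, a contradiction. The main point to check carefully is the induction in the previous paragraph, namely that repeated slide-ups at a fixed pre-order index keep that node at the same index and terminate exactly when it becomes the leftmost child of $u_{i-1}$; this is straightforward from the definition of slide-up.
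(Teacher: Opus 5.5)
Your proof is correct and takes essentially the paper's route: the paper settles the whole statement by citing \cref{prop:straight} with $T^* = T'$, and you make explicit the two points it leaves implicit, namely that the tail of the canonical chain is itself canonical (by the uniqueness in \cref{cor:canonical-chain}) and that $\firstasc(\midtree(I)) = k$ (linearity of $\portion(\cdot, k)$ after straightening, plus \cref{cor:parent-fixation} for the reverse inequality). Your termination worry is moot, since once no slide-up at the $i$-th node is possible that node is a leftmost child, which forces its parent to be $u_{i-1}$.
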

\begin{proof}
  This is a direct consequence of \Cref{prop:straight}.
\end{proof}

\begin{defn} \label{defn:backbone}
  For $I = (T, T') \in \interval_n$ and $R \in \tree_n$ with $T \leqgreedy R \leqgreedy T'$, the \tdef{backbone} of $R$ with respect to $I$ is $\portion(R,k)$ where $k = \firstasc(I)$. Let $u_0, \ldots, u_n$ be the nodes of $R$ in the pre-order. The \tdef{side-forest} of $R$ is the forest obtained from $R$ by deleting the edges of its backbone. This forest consists of  \tdef{side-trees} $\tau_i(R)$ rooted at the nodes $u_i$ for $0 \leq i < k$. We denote by $C_i(R)$ the set of indices of non-root nodes of $\tau_i(R)$, and by $c_i(R)$ the cardinality of $C_i(R)$. We draw backbones in blue and side-forests in red, see the examples in \Cref{fig:midpoint-tree}.
\end{defn}

Although the backbone of a tree $R$ is defined with respect to an interval $I$ containing $R$, in the following this interval should always be clear from the context. For an interval $I = (T, T')$, it is clear that the backbone of $\midtree(I)$ and of $T'$ is the linear tree of size $\firstasc(I)$. Note that, although one may consider $C_k(R)$ for $R$ inside $I$, by \Cref{cor:parent-fixation}, as $k = \firstasc(I) = \firstasc(T')$, the $k$-th node in pre-order of $T'$ is a leaf, hence also that of $R$, meaning that $C_k(R)$ is always empty.

Like ascents, the trees in the side-forest partition nodes not in the backbone into intervals in the pre-order. We define the \tdef{rightmost path} of a plane tree $T$ to be the path from the rightmost leaf of $T$ to the root.

\begin{lemma} \label{lem:side-straightening}
  For $I = (T, T') \in \interval_n$ and $R \in \tree_n$ with $T \leqgreedy R \leqgreedy T'$, let $k = \firstasc(R)$ and $u_0, \ldots, u_n$ the nodes of $R$ in pre-order. We have
  \begin{enumerate}
  \item The root $u_i$ is the only node in $\tau_i(R)$ with index at most $k$;
  \item All non-empty $C_i(R)$ are intervals of integers, and they partition $[k + 1, n]$.
  \item For $R = \midtree(I)$, every node $u_i$ with $C_i(R) \neq \varnothing$ is on the rightmost path of the backbone of $R$.
  \end{enumerate}
\end{lemma}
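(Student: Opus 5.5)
The plan is to read everything off the structure of the pre-order, using that the backbone is a pre-order prefix. Throughout, I take $k=\firstasc(I)$, the index used to define the backbone in \Cref{defn:backbone}. If $k$ is instead read as $\firstasc(R)$, the same argument applies verbatim to $\portion(R,k)$.

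\textbf{Setup.} The backbone $\portion(R,k)$ is a subtree containing the root, since a pre-order prefix is closed under taking parents. Deleting its edges therefore leaves components that each contain exactly one backbone node $u_i$, $0\le i\le k$, and $\tau_i(R)$ is the component of $u_i$.

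\textbf{The component of $u_k$ is trivial.} We need $u_k$ to be a leaf of $R$. Since $R\leqgreedy T'$, the pair $(R,T')$ is an interval. The $k$-th node of $T'$ ends its first ascent, so it is a leaf. \Cref{cor:parent-fixation} applied to $(R,T')$ then shows that $u_k$ is a leaf of $R$. This justifies indexing the side-trees by $0\le i<k$.

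\textbf{Item (1).} Each component contains a single backbone node, and the backbone nodes are exactly $u_0,\dots,u_k$. So no non-root node of $\tau_i(R)$ has index at most $k$.

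\textbf{Item (2).} The key observation is the state of the depth-first traversal right after $u_k$ is visited. At that moment, the nodes whose subtrees are not yet exhausted are exactly those on the path $u_k=v_d, v_{d-1},\dots,v_0=u_0$ from $u_k$ to the root.
\begin{enumerate}
\item Every node of index $>k$ lies in the subtree of a not-yet-visited child of some $v_t$.
\item The traversal processes these in order: first all remaining children of $v_d$ with their full subtrees, then those of $v_{d-1}$, and so on up to $v_0$.
\item None of these children are backbone nodes, since all backbone nodes have index $\le k$. Hence the remaining subtrees hanging from $v_t$ are exactly the non-root part of $\tau_{v_t}(R)$.
\item Every other backbone node has all its children in the backbone, so its side-tree is trivial.
\end{enumerate}
It follows that each nonempty $C_i(R)$ is a block of consecutive indices, and these blocks tile $[k+1,n]$.

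\textbf{Item (3).} The argument above shows more: for every $R$ in the interval, $C_i(R)\neq\varnothing$ forces $u_i$ to lie on the path from $u_k$ to the root. Since $u_k$ is the last node of $\portion(R,k)$ in pre-order, it is the rightmost leaf of the backbone. So this path is the rightmost path of the backbone.

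For $R=\midtree(I)$ there is an even shorter argument. By \Cref{prop:midpoint-tree}, $\firstasc(\midtree(I))=k$, so the backbone is linear and its rightmost path is the whole backbone.

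\textbf{Main obstacle.} The only delicate step is stating the stack description of the depth-first traversal precisely: after visiting $u_k$, the unvisited nodes are exactly the later siblings' subtrees along the root-to-$u_k$ path, visited deepest-first. I would prove this by a short induction on the pre-order. Everything else is bookkeeping.
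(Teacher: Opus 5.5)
Your proof is correct and follows the paper's argument. Item (1) comes from the backbone being a parent-closed pre-order prefix, item (2) from the later subtrees hanging consecutively off the path from $u_k$ to the root, and item (3) from the fact that only nodes on the rightmost path of the backbone can have children outside it. Your two extra remarks are valid refinements that the paper leaves implicit: reading $k=\firstasc(I)$ resolves the mismatch between the statement and \Cref{defn:backbone}, and item (3) is immediate for $\midtree(I)$ because its backbone is linear.
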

\begin{proof}
  The first point comes directly from \Cref{defn:backbone}, as every path linking $u_i, u_j$ with $i < j \leq k$ passes only through nodes preceding $u_j$ in the pre-order, thus in the backbone of $R$.

  The second point comes from the fact that every $u_j$ with $j > k$ is in some $\tau_i(R)$, and subtrees of $u_i$ in $\tau_i(R)$ are exactly those rooted at $u_j$ with $j > k$, which are consecutive subtrees of $u_i$, thus all their nodes come consecutively in the contour walk of $R$.

  The third point comes from the fact that, if a node $u_i$ is not on the rightmost path of the backbone $\portion(R, k)$ of $R$, then all its descendants are in $\portion(R, k)$.
\end{proof}

We note that the second point of \Cref{lem:side-straightening} shows that the sequence $(c_i(R))_{0 \leq i \leq k}$ determines all the sets $C_i(R)$'s. One can give a more precise description of the side-forest of the midpoint tree of an interval.

\begin{prop} \label{prop:midtree-side-pos}
  For $I = (T, T') \in \interval_n$, let $k = \firstasc(I)$ and $R = \midtree(I)$. We denote by $u_i$ the $i$-th node in pre-order of $T$. Then, for $0 \leq i < k$ with the side-tree $\tau_i(R)$ non-empty, the node $u_{i + 1}$ is on the rightmost path of the backbone of $T$.
\end{prop}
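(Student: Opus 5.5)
The plan is to argue by contrapositive and to use only the monotonicity properties of slide-ups, rather than tracking the whole straightening process. Two facts come first.

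The first fact is that a slide-up does not change the pre-order labels of nodes. By \cref{prop:tree-greedy-poset}, a slide-up only exchanges a Dyck factor $w^\circ$ with the letter $0$ preceding it. Since first visits correspond to the letters $1$, their relative order is unchanged. We may therefore speak of "the node $u_j$" throughout the chain from $T$ to $R=\midtree(I)$.

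The second fact is that the descendant relation is monotone along $\leqgreedy$. The slid subtrees are reattached to $v'$, which is a child of their old parent $v$. Every former ancestor therefore remains an ancestor, so the ancestor set of each node can only grow.

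Now suppose $\tau_i(R)\neq\varnothing$ for some $i<k$, where $k=\firstasc(I)$. Then there is a node $x=u_m$ with $m>k$ whose parent in $R$ is $u_i$. Since $\firstasc(R)=k$, the node $u_{i+1}$ is also a child of $u_i$ in $R$, and its subtree contains $u_{i+1},\dots,u_k$. Assume, for contradiction, that $u_{i+1}$ is not on the rightmost path of $\portion(T,k)$. Then there is some $u_j$ with $i+1<j\le k$ that is not a descendant of $u_{i+1}$ in $T$.

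Let $p$ be the parent of $x$ in $T$. By monotonicity, the $T$-ancestors of $x$ form a subset of its $R$-ancestors $\{u_0,\dots,u_i\}$. Since $x$ comes after $u_j$ in pre-order and $p$ has index at most $i<j$, the node $p$ must be an ancestor of $u_j$ in $T$. The remaining step is to show that $p$ would then be forced strictly above $u_i$ in $R$, or that $u_{i+1}$ could not have become an ancestor of $u_j$. This would contradict $p$ being an ancestor of $u_i$ in $R$, or contradict $x$ being a child of $u_i$.

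For this I would examine the straightening of \cref{defn:straightening} step by step. After the steps at indices $1,\dots,j-1$, the nodes $u_0,\dots,u_{j-1}$ form a path (by \cref{lem:portion-invariant}). The step at index $j$ then repeatedly slides up the node $u_j$ together with all its right siblings. The key claim is that every such slide carries $x$ along with $u_j$: $x$ stays a right sibling of the ancestor of $u_j$ being moved, because $p$ is an ancestor of $u_j$ and $x$ lies to its right. Hence at the end of this step $x$ is a child of $u_{j-1}$, not of $u_i$, since $j-1\ge i+1$. Slide-ups at later indices only push $x$ deeper, again by monotonicity, so $x$ can never be a child of $u_i$ in $R$. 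This is the desired contradiction.

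The main obstacle is this carrying argument. It needs a clean invariant saying that, during the step at index $j$, the node $x$ always lies in the block consisting of the currently slid node and its right siblings. I would first try to phrase this invariant on the contour word. There, the step at index $j$ moves a Dyck factor starting at $u_j$ leftward past $0$'s, and $x$ lies inside that maximal factor $w^\circ$ because $p$ is an ancestor of $u_j$. The delicate case is when $x$ already sits in the same maximal factor before the step at index $j$ begins. This should follow from applying the same reasoning inductively to the earlier steps at indices between $i+1$ and $j$.
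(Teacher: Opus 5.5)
Your preliminary facts are sound: slide-ups preserve pre-order labels, ancestor sets only grow, the $T$-parent $p$ of $x$ is a $T$-ancestor of $u_j$, and $u_0,\dots,u_{j-1}$ form a path before step $j$. But the proof rests on the carrying claim, which you leave open, and the reason you give for it does not work. A slide-up at $u_j$ moves only the subtrees of $u_j$ and its right siblings, i.e.\ of children of the \emph{current} parent $u_b$ of $u_j$. So $x$ is carried only if $x$ currently descends from $u_b$. Knowing that $p$ is an ancestor of $u_j$ says nothing about this, since $u_b$ can lie strictly below $p$.

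Concretely, let $T$ have root $u_0$ with children $u_1,u_5$, where $u_1$ has leaf children $u_2,u_3,u_4$. Take $T'=\straight_4(T)$, so $k=4$, and $x=u_5$. For $j=3$ (or $j=4$), $p=u_0$ is an ancestor of $u_j$ and $x$ lies to its right. Yet step $3$ moves $u_3,u_4$ under $u_2$, step $4$ moves $u_4$ under $u_3$, and $x$ stays a child of $u_0$ throughout. So your justification would ``prove'' something false. The carrying claim can only hold if you genuinely use the hypothesis that $u_j$ is not a $T$-descendant of $u_{i+1}$, and the proposal never explains how. Note also that the failure already occurs at the very first slide of step $j$, not only in your ``delicate case''.

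The missing ingredient is an invariant that makes step-by-step tracking unnecessary: a slide-up at index $\ell$ never changes the immediate left sibling of a node $u_d$ with $d\neq\ell$. This holds because the moved block keeps its internal order, and the receiving node gets its new children appended on the right. Take $d=\min C_i(R)$; then $d>k$, and $u_d$ is the immediate right sibling of $u_{i+1}$ in $R$. By \cref{cor:parent-fixation}, $u_d$ is not a leftmost child in $T$, since otherwise it would stay a child of $u_{d-1}\neq u_i$. The straightening only slides at indices $\le k<d$, so $u_{i+1}$ is already the immediate left sibling of $u_d$ in $T$. Hence the $T$-subtree of $u_{i+1}$ is $\{u_{i+1},\dots,u_{d-1}\}$, which contains $u_k$, so $u_{i+1}$ lies on the rightmost path of $\portion(T,k)$. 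This is exactly the paper's argument.
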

\begin{proof} 
  We observe in \Cref{defn:greedy-poset-tree} that, if $u_i$ and $u_j$ with $i < j$ are consecutive siblings in $T$, then they remain so for $T'$ with $T \covgreedy^{(\ell)} T'$ for $\ell < j$. Let $d = \min(C_i(R))$. By \Cref{lem:side-straightening} we have $d > k$. Suppose that $u_c$ is the sibling of $u_b$ immediately to its left in $T$. From our observation and \Cref{defn:midpoint}, they remain consecutive siblings in $R$, which implies $c = i + 1$ as $\portion(R, k)$ is a linear tree.
\end{proof}

An analogue of \cref{lem:alf-cover} holds for the side forest.

\begin{lemma} \label{lem:side-forest-moves}
  For $(T, T') \in \interval_n$ and $T \leqgreedy R \leqgreedy R' \leqgreedy T'$, the set partition formed by non-empty $C_i(R')$'s is refined by that formed by the non-empty $C_j(R)$'s.
\end{lemma}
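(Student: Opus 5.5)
The plan is to reduce to a single cover relation and track, for each non-backbone node, which side-tree it belongs to. Throughout, $k = \firstasc(I)$ is fixed, since the backbone is defined with respect to the interval $I = (T, T')$. By transitivity of refinement it suffices to treat $R \covgreedy^{(\ell)} R'$ with $T \leqgreedy R \leqgreedy R' \leqgreedy T'$.

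The first observation is that a slide-up does not change the pre-order labelling of nodes. On contour paths (\cref{prop:tree-greedy-poset}) a slide-up only exchanges a $0$ with the block $w^\circ$ that follows it. This preserves the relative order of the $1$'s, and the $1$'s index the nodes in pre-order. So we may identify the nodes of $R$ and $R'$ by their index, and it makes sense to compare $C_i(R)$ with $C_j(R')$.

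Next I characterize membership in a side-tree. For $j > k$, $u_j \in \tau_i(R)$ exactly when $u_i$ is the deepest ancestor of $u_j$ with index at most $k$. This follows from \cref{lem:side-straightening}(1): the path from $u_j$ to the root leaves the side-tree exactly at its root, which is a backbone node. The goal is then to show that every non-empty class $C_i(R)$ lies inside a single class $C_{i'}(R')$; since both families partition $[k+1,n]$, this is the claimed refinement.

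Let the slide-up act at the inner corner $(v,v',v'')$ with $v''=u_\ell$. Call the nodes of the subtrees of $v$ rooted at $v''$ and at the later children of $v$ the \emph{moved} nodes; they form an interval of indices starting at $\ell$.

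For a non-moved node, its ancestor chain is unchanged, because no moved node is an ancestor of a non-moved node. Hence its side-tree root is unchanged.

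For a moved node $u_j$ with $j > k$, its ancestors in $R'$ are its ancestors inside the moved subtrees, then $v'$, then the ancestors of $v$ (including $v$). If the deepest ancestor with index $\le k$ lies inside the moved subtrees, it is unchanged. Otherwise, in $R$ that ancestor is the deepest ancestor of $v$ (possibly $v$ itself) with index $\le k$; call it $w$. In $R'$ it becomes $v'$ if $v'$ has index $\le k$, and stays $w$ otherwise. In either case all moved nodes of this second kind had the common root $w$ in $R$, and they get a common root in $R'$.

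It remains to check that a class $C_w(R)$ does not split into moved and non-moved parts sent to different roots. Suppose $v'$ has index $\le k$ (otherwise nothing changes). The non-moved nodes of $\tau_w(R)$ that are relevant are the children of $v$ preceding $v''$ and their descendants. These come before $v''$ in pre-order, and the child immediately before $v''$ is $v'$, which has index $\le k$. So every non-moved node of $\tau_w(R)$ hanging below $v$ lies in the subtree of $v'$, whose indices are $\le k$ only for backbone nodes.

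The main obstacle is making this last point airtight. I would use \cref{lem:side-straightening}(2) together with the pre-order: indices in $C_w(R)$ form an interval of integers above $k$. Any non-moved element of $C_w(R)$ that is not in the subtree of $v'$ would sit to the left of $v'$ in pre-order. Such a node either precedes index $k$, which is impossible since its index exceeds $k$, or lies in a part of the tree whose root in $R'$ is also unchanged. I would split into cases according to whether $w = v$ or $w$ is a proper ancestor of $v$, and verify in each case that the whole of $C_w(R)$ lands in a single class $C_{v'}(R')$ or $C_w(R')$. This case check near the corner is where care is needed; the rest is bookkeeping of ancestors.
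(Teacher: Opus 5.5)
Your overall route matches the paper's: you reduce to a single slide-up, do a local analysis at the corner $(v,v',v'')$ (the paper's $(u_a,u_b,u_\ell)$), and the identification of nodes by pre-order index and the ``deepest ancestor of index at most $k$'' description of $\tau_i(R)$ are both correct. The gap is the one step where something could actually go wrong, namely that $C_w(R)$ does not split. You call this ``the main obstacle'', and the sketch you give for it does not work.

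\begin{itemize}
\item Your planned case ``$w$ a proper ancestor of $v$'' cannot be verified, because in that case the class \emph{would} split. There $v$ itself is a non-moved element of $C_w(R)$ that keeps root $w$, while the moved nodes go to $v'$.
\item The alternative ``lies in a part of the tree whose root in $R'$ is also unchanged'' describes a split rather than excluding one.
\item Non-moved nodes of index $>k$ below $v'$ are not in $\tau_v(R)$ at all.
\end{itemize}

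The missing observation is that a parent precedes its child in pre-order. If $v'=u_b$ with $b\le k$, then $v=u_a$ has $a<b\le k$. So $w=v$, and the proper-ancestor case is vacuous. Moreover, the non-moved children of $v$ are $v'$ and its elder siblings. The elder siblings and their subtrees have indices $<b\le k$. Any node of index $>k$ below $v'$ has $v'$ or a deeper backbone node as its deepest ancestor of index $\le k$. Hence $C_v(R)$ consists entirely of moved nodes, and you get $C_{v'}(R')=C_{v'}(R)\cup C_v(R)$ and $C_v(R')=\varnothing$, with all other classes unchanged. When $\ell\le k$, one has $C_{v'}(R)=\varnothing$ automatically. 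This is exactly the paper's case analysis, and with this observation in place your argument is complete.
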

\begin{proof}
  We first show our claim for $R \covgreedy^{(\ell)} R'$. We denotes by $u_i$ the $i$-th node of $R$ in pre-order, and we take $k = \firstasc(T')$. Let $u_a$ (resp. $u_b$) be the parent (resp. the preceding sibling) of $u_\ell$. We have $a < b < \ell$. There are several cases for $\ell$.
  \begin{itemize}
  \item Case $\ell \leq k$: In this case, $C_a(R') = \varnothing$, $C_b(R') = C_a(R)$, and $C_d(R') = C_d(R) = \varnothing$ for all $a < d < b$ by \Cref{prop:midtree-side-pos}, as no such $u_d$ can be on the rightmost path of the backbone of $R$ or $R'$. Hence, $C_i(R') = C_i(R)$ except for $i = a, b$, and the concerned set partitions are the same for both $R$ and $R'$.
  \item Case $\ell > k$: If $b > k$, then $u_b$ is also in some $\tau_j(R)$, thus $C_i(R') = C_i(R)$ for all $i$. Otherwise, $u_b$ must be on the rightmost path of the backbone of $R$, thus $C_a(R') = \varnothing$, $C_b(R') = C_a(R) \cup C_b(R)$. By the same argument as in the previous case, we have $C_d(R') = C_d(R) = \varnothing$ for $a < d < b$. Hence, the concerned set partition for $R'$ are either the same as that for $R$, when $C_a(R)$ or $C_b(R)$ is empty, or is otherwise obtained from that for $R$ by merging two consecutive non-empty parts $C_a(R)$ and $C_b(R)$.
  \end{itemize}
  We then conclude by observing that our claim is transitive.
\end{proof}

Note that, while \cref{lem:alf-cover} holds also for the classical Tamari order, \Cref{lem:side-forest-moves} does not. This is a place where the crucial difference between the two orders manifests itself.

\subsection{Construction of greedy Tamari intervals}

We introduce two constructions of larger greedy Tamari intervals from smaller ones, whose combination shows that the generating function $F_\interval$ of greedy Tamari intervals also satisfies \Cref{eq:interval-decomp}, leading to a recursive bijection transferring the corresponding statistics used to define the weights given in \Cref{eq:interval-weight}.

\begin{defn} \label{defn:addleaf-constr}
  Given $I = (T, T') \in \interval_n$, for $0 \leq k \leq n$, we define $\addleaf(I, k) = (S, S')$ as follows. Let $u_0, \ldots, u_n$ (resp. $u'_0, \ldots, u'_n$) be the nodes of $T$ (resp. $T'$) in the pre-order. Then $S$ (resp. $S'$) is obtained from $T$ (resp. $T'$) by inserting a leaf as the leftmost child of $u_k$ (resp. $u'_k$). 
\end{defn}

See \cref{fig:addleaf-constr} for an example of the construction of $\addleaf(I, k)$. It is clear that $\addleaf(I, k)$ is a greedy Tamari interval of size $n + 1$, as there is a new edge, and the slide-ups transforming $T$ to $T'$, up to shifting those at the nodes after the $k$-th node in the pre-order by 1, also transform $S$ to $S'$.

\begin{figure}
  \centering
  \insertfig{6}{0.7}
  \caption{Example of the first construction $\addleaf(I, k)$}
  \label{fig:addleaf-constr}
\end{figure}

\begin{prop} \label{prop:addleaf-weight}
  For $I = (T, T') \in \interval_n$, let $\ell = \firstasc(I)$. For $0 \leq k < \ell$, we have
  \[
    \intweight(\addleaf(I, k)) = tx^{k+1-\ell}p_{\ell - k} \intweight(I).
  \]
  We also have
  \[
    \intweight(\addleaf(I, \firstasc(I))) = tx \intweight(I).
  \]
\end{prop}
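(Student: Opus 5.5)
The plan is to compute the ascent profile of the upper tree $S'$ of $\addleaf(I,k)=(S,S')$ directly from that of $T'$. By \Cref{defn:greedy-tamari-interval}, both $\firstasc$ and $\alf$ of an interval are read off its upper element, and the weight \eqref{eq:interval-weight} depends only on these two statistics. The factor $t$ simply records that $S'$ has $n+1$ edges while $T'$ has $n$, since in the generating function \eqref{eq:interval-gf} the size is tracked by $t$. So everything reduces to a local comparison of $\alf(S')$ with $\alf(T') = (a_1, \ldots, a_r)$, where $a_1 = \ell$. No property of the interval itself is needed here; $\addleaf(I,k)$ being an interval was already observed after \Cref{defn:addleaf-constr}.

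First I recast ascents in a form that makes the effect of the insertion transparent. An ascent of a tree is a maximal sequence $u_j, u_{j+1}, \ldots, u_i$ in pre-order where each $u_{\ell+1}$ is the leftmost child of $u_\ell$ and $u_i$ is a leaf. Equivalently, the ascents are cut exactly at the nodes that are not leftmost children, together with the root. Inserting a new leaf $v$ as the leftmost child of $u'_k$ changes this data only at $u'_k$: the node $v$ becomes the leftmost child of $u'_k$, and the former leftmost child of $u'_k$ (if it exists) becomes a non-leftmost child. All other leftmost-child relations and leaf statuses are unchanged, and the pre-order of the old nodes is preserved, with $v$ inserted right after $u'_k$. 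Hence every ascent of $T'$ not passing through $u'_k$ reappears unchanged in $S'$, in the same relative order.

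Case $0 \le k < \ell$. Here $u'_k$ lies on the first ascent $u'_0, \ldots, u'_\ell$ of $T'$ and is not a leaf, since $u'_{k+1}$ is its leftmost child. In $S'$ the first ascent becomes $u'_0, \ldots, u'_k, v$, of length $k+1$, because $v$ is a leaf. Next, $u'_{k+1}$ is now a non-leftmost child of $u'_k$, so a new ascent starts at $u'_k$ and runs $u'_k, u'_{k+1}, \ldots, u'_\ell$, of length $\ell - k$. The remaining ascents are $a_2, \ldots, a_r$ as before. Thus $\alf(S') = (k+1, \ell-k, a_2, \ldots, a_r)$, giving weight $x^{k+1} p_{\ell-k}\prod_{i\ge2}p_{a_i} = x^{k+1-\ell}p_{\ell-k}\intweight(I)$.

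Case $k = \ell$. Here $u'_\ell$ is the leaf ending the first ascent. Giving it the child $v$ extends the first ascent to length $\ell+1$, with no new cut, so $\alf(S') = (\ell+1, a_2, \ldots, a_r)$ and the weight gains exactly one factor $x$. Including the size factor $t$ in both cases gives the claimed formulas.

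The main point requiring care is the claim that ascents away from $u'_k$ are untouched, and in particular that the ascent following the first one in $T'$ is not merged with or split from the new ascent. This holds because ascent boundaries are determined by the non-leftmost-child condition, which changes only for the old leftmost child of $u'_k$.
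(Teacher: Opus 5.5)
Your proof is correct and follows the paper's argument: both compare $\alf(S')$ with $\alf(T')$ directly. For $k<\ell$ the old first ascent is replaced by a first ascent of length $k+1$ and a new ascent of length $\ell-k$, with all other ascents untouched; for $k=\ell$ the first ascent is extended by one; and $t$ accounts for the extra edge. One small wording point: as in the contour path, an ascent beginning at a non-leftmost child must include the edge to that child's parent. That is how you (correctly) count the new ascent $u'_k, u'_{k+1}, \ldots, u'_\ell$ as having length $\ell-k$, whereas your stated ``maximal pre-order sequence of leftmost-child steps'' would literally drop that edge.
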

\begin{proof}
  Let $(S, S') = \addleaf(I, k)$, and $u'_0, \ldots, u'_n$ the nodes of $T'$ in the pre-order. Since $0 \leq k \leq \ell$, the node $u'_k$ is on the first ascent of $T$, and $u'_k$ is an internal node when $k < \ell$.

  For the first part, $S'$ is obtained by adding a leaf $v$ as the leftmost child of $u_k$ in $T'$, which makes $v$ the first leaf of $S'$. We thus have $\firstasc(S') = k + 1$. The node $u'_\ell$ in $T'$, which is a leaf, has its ascent  cut off at $u'_k$, thus adding a factor $p_{\ell - k}$. This gives the weight of $(S, S')$. For the second part, the new leaf $v$ is added as the child of $u'_\ell$, thus replaces $u'_\ell$ as the leftmost leaf. The first ascent is thus extended by $1$ without introducing any new leaf. Accounting the changes in weights gives the result.
\end{proof}

The second construction will produce an interval $I = (S, S')$ from two smaller intervals $I_1 = (T_1, T'_1)$ and $I_2 = (T_2, T'_2)$. Roughly, $S'$ will be simply obtained by putting $T'_2$ above $T'_1$, and for $S$ we will need to cut $T_1$ into two parts and insert $T_2$ inbetween. We now detail the precise construction.

Given $I = (T, T') \in \interval_n$ with $n > 0$, let $k = \firstasc(I)$. Let $u_\ell$ be the lowest node on the rightmost path $P$ of the backbone $\portion(k, T)$ of $T$ such that for every node $u_a$ in $P$ above it, including itself, one has $C_a(T) = \varnothing$. The existence of $u_\ell$ is ensured by $C_k(T) = \varnothing$ and $u_k \in P$. We define the \tdef{head} of $T$ in the interval $I$ to be the subtree of $T$ rooted at $u_\ell$, which is also a subtree of $\portion(T, k)$, and the \tdef{body} of $T$ to be the tree obtained from $T$ by cutting out its head. See \cref{fig:head-body-decomp} for an illustration. Note that the body of $T$ may be empty in the sense of not containing any node, which corresponds to the case where $T'$ is the maximal element of the greedy Tamari poset of order $n$, in which case the backbone of $T$ is $T$ itself, thus $u_\ell = u_0$. However, the head of $T$ is never empty, its number of nodes is $h = k - \ell + 1 \geq 1$.

\begin{figure}
  \centering
  \insertfig{8}{0.35}
  \caption{An example of the head and body of $T$ in a greedy Tamari interval $I=(T, T')$}
  \label{fig:head-body-decomp}
\end{figure}

\begin{defn} \label{defn:product-constr}
  Let $I_1 = (T_1, T'_1)$ and $I_2 = (T_2, T'_2)$ be greedy Tamari intervals, with $n_1 \geq 0$ the size of $I_1$ and $n_2 > 0$ the size of $I_2$. We define $\intproduct(I_1, I_2) = (S, S')$ as follows, illustrated in \cref{fig:product-constr}. The tree $S'$ is obtained by attaching $T'_2$ as a subtree of the leftmost leaf of $T'_1$, For the tree $S$, let $k_2 = \firstasc(I_2)$. We first insert the head of $T_1$ as a subtree in $T_2$ so that its root is the sibling just to the left of the $k_2$-th node of $T_2$ in the pre-order, and we denote by $C_*$ the tree obtained. Then $S$ is obtained by replacing in $T_1$ its head by $C_*$. It is clear that both $S$ and $S'$ are trees of size $n_1 + n_2 + 1$.
\end{defn}

\begin{figure}
  \centering
  \insertfig{7}{0.8}
  \caption{An example of the construction $\intproduct(I_1, I_2)$}
  \label{fig:product-constr}
\end{figure}

\begin{prop} \label{prop:product-weight}
  For $I_1 = (T_1, T'_1) \in \interval_{n_1}$ and $I_2 = (T_2, T'_2) \in \interval_{n_2}$ with $n_1 \geq 0$ and $n_2 > 0$, let $(S, S') = \intproduct(I_1, I_2)$. Then $J = (S, S') \in \interval_{n_1 + n_2 + 1}$. Furthermore, one has
\[
  \intweight(J) = tx \intweight(I_1) \intweight(I_2).
\]
\end{prop}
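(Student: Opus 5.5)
Two things must be shown: that $S \leqgreedy S'$, and the weight identity. I would dispose of the weight first. The weight of $J$ only depends on $S'$, through $\firstasc(S')$ and $\alf(S')$. In $S'$, the tree $T'_2$ hangs from the leftmost leaf of $T'_1$, so the first ascent of $S'$ is the first ascent of $T'_1$, then one new edge, then the first ascent of $T'_2$. This gives $\firstasc(J) = \firstasc(I_1) + 1 + \firstasc(I_2)$. Here I read the ``attachment'' as a new edge joining the leftmost leaf of $T'_1$ to the root of $T'_2$, which is consistent with the sizes adding to $n_1+n_2+1$.

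The remaining ascents of $S'$ are the non-first ascents of $T'_2$ followed by those of $T'_1$, in contour order. Hence
\[
\intweight(J) = t\,x^{\firstasc(I_1)+1+\firstasc(I_2)} \prod_{i\geq 2} p_{a_i(T'_1)} \prod_{i\ge 2} p_{a_i(T'_2)} = tx\,\intweight(I_1)\intweight(I_2),
\]
with the factor $t$ counting the extra edge. The edge case $n_1=0$, where $T'_1$ is a single node, fits the same computation.

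For $S \leqgreedy S'$, I would build an explicit chain in three stages, following the canonical chains.

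\textbf{Stage 1.} Inside the copy of $T_2$ in $S$, apply the slide-ups of the canonical chain of $I_2$, with indices shifted by the position where $T_2$ sits. The inserted head of $T_1$ is a subtree placed just left of the $k_2$-th node of $T_2$. Slide-ups at nodes of index less than $k_2$ in $T_2$ only move things that sit to the right of, or above, this attachment, and they carry the head along as part of the moved forest. Slide-ups at index $\geq k_2$ take place in the side-forest of $T_2$, to the right of the head. Using \cref{lem:portion-invariant} and the structure given by \cref{prop:midtree-side-pos,lem:side-straightening}, I expect that after these moves the copy of $T_2$ becomes $T'_2$, with the head of $T_1$ still attached as the left sibling of the $k_2$-th node, which is now the leaf ending the first ascent of $T'_2$.

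\textbf{Stage 2.} Perform slide-ups at that $k_2$-th node, and then at its successors, to lift $T'_2$ onto the top leaf of the head of $T_1$. Since the head is a subtree of the linear backbone part, its top node is $u_k$ of $T_1$, and the lift places $T'_2$ above it. The result is the tree $T_1$ with $T'_2$ grafted at the node $u_{k_1}$ of the backbone of $T_1$.

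\textbf{Stage 3.} Perform the slide-ups of the canonical chain of $I_1$ on this tree. The grafted $T'_2$ sits above the leaf ending the first ascent, and by \cref{cor:parent-fixation} that node stays a leaf of the first ascent throughout. So $T'_2$ is transported rigidly, and we end at $S'$.

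The main obstacle is Stage 1 together with the transition into Stage 2: making sure that the head of $T_1$, inserted at that specific corner, is carried correctly by the slide-ups of $I_2$, and that nothing extra gets swept into a slid forest. The head consists only of nodes with empty side-trees, so it is a linear chain, and this is presumably exactly why the head was defined that way. I would check this by a case analysis on the index $j$ of each slide-up relative to $k_2$, in the spirit of the proof of \cref{lem:side-forest-moves}.
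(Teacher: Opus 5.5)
\textbf{Verdict.} Your weight computation is correct and matches the paper's. In particular, $\firstasc(J)=\firstasc(I_1)+1+\firstasc(I_2)$ is exactly what the factor $x$ requires. The gap is in the proof that $S\leqgreedy S'$, at Stage~2.

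\textbf{Where Stage~2 fails.} The problem comes from your claim that the head of $T_1$ is a linear chain. That claim is false. The definition of $u_\ell$ only forces empty side-trees at the nodes of the rightmost path of the backbone from $u_\ell$ upwards. Nodes of the head off that path do have all their descendants inside $\portion(T_1,k_1)$, but they may branch, since $T_1$ is not straightened. When the head branches, the tree you aim for at the end of Stage~2 ($T_1$ with $T'_2$ grafted at $u_{k_1}$) is not above $S$, so no sequence of slide-ups reaches it.

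\textbf{Counterexample.} Let $T_1$ be the root with two leaf children, $T'_1$ the path with two edges, and $T_2=T'_2$ a single edge. Then $k_1=2$ and the head is all of $T_1$. The tree $S$ has contour word $11010010$, while your Stage~2 tree $R$ has contour word $10111000$. In $S$ the node in position~$1$ has children, whereas in $R$ it is a leaf. By \cref{cor:parent-fixation}, it is therefore false that $S\leqgreedy R$; equivalently, slide-ups only move letters $0$ to the right.

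Since slide-ups preserve the pre-order, ``lifting $T'_2$ onto the head'' really means the following. The positions occupied in $S$ by the first ascent of the copy of $T_2$, followed by the head, must eventually take the shape of the head followed by a chain. This is possible only once the head itself has been made linear.

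\textbf{A smaller issue in Stage~1.} The moves of the canonical chain of $I_2$ at index exactly $k_2$ cannot be performed at $v_{k_2}$, because its left sibling is now the root of the head. They must be replaced by slide-ups at the root of the head, which carry the head along. This is harmless, but it means these moves do not live in the side forest, contrary to what you state.

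\textbf{The missing idea: straighten before lifting.} This is what the paper does. With $k=k_1+k_2+1$, it proves
$S\leqgreedy \straight_k(S)\leqgreedy S_+\leqgreedy S'$,
where $S_+$ is $\midtree(I_1)$ with $\midtree(I_2)$ attached by a new edge at its $k_1$-th node.
\begin{itemize}
\item The $k$-straightening linearizes the whole $k$-portion of $S$, head included.
\item Hence $\straight_k(S)$ and $S_+$ have the same linear $k$-portion and the same side trees.
\item The side trees coming from $T_2$ are merely attached at or below their positions in $S_+$, so they can be slid up.
\end{itemize}
Your Stage~3 non-interference argument then applies essentially verbatim. It must start from $\midtree(I_1)$ and $\midtree(I_2)$, using only the parts of the canonical chains of $I_1$ and $I_2$ after their midpoints, rather than from the unstraightened $T_1$.
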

\begin{proof}
  We first show that $J$ is indeed an interval by showing a way to go from $S$ to $S'$ using slide-ups. Let $u_0, \ldots, u_{n_1}$ (resp. $v_0, \ldots, v_{n_2}$) be the nodes in $T_1$ (resp. $T_2$) in pre-order. For simplicity, we identify nodes in $T_1$ and $T_2$ with their counterpart in $S$ from the construction of $S$ in \cref{defn:product-constr}.

  We take $k_1 = \firstasc(I_1)$, $k_2 = \firstasc(I_2)$, and $k = k_1 + k_2 + 1$. From the construction of $S'$ in \cref{defn:product-constr}, the leftmost leaf in $S'$ is the $k$-th node in the pre-order. Let $S_{\midtree} = \straight_k(S)$ be the $k$-straightening of $S$. By definition, we have $S \leqgreedy S_{\midtree}$, and $\portion(S_{\midtree}, k)$ is the linear tree  of size $k$. Let $S_+$ be the tree obtained by attaching $\midtree(I_2)$ as a subtree to the $k_1$-th node of $\midtree(I_1)$, which is a leaf. It is clear that both $S_{\midtree}$ and $S_+$ have the same $k$-portion, and they also have the same subtrees attached to this $k$-portion, because the parental relations for side trees remains the same in the midpoint trees. The side trees of $S_{\midtree}$ coming from $T_1$ are already at the same positions as in $S_+$, while the side trees of $S_{\midtree}$ coming from $T_2$ have their points of attachment to the backbone  lower than the corresponding ones in $S_+$ by the size of the head of $T_1$ plus one. It follows that $S_{\midtree} \leqgreedy S_+$, since we can move the side trees of $S_{\midtree}$ from $S_2$ up to their positions in $S_+$ by successive slide-ups, for side trees from above to below, sliding up all the side trees attached to the same node at the corner before all of them. Now, it is clear that $S_+ \leqgreedy S'$, since we may apply the slide-ups used to get from $\midtree(I_1)$ to $T'_1$ and those from $\midtree(I_2)$ to $T'_2$ to the part of $S_+$ from $\midtree(I_1)$ and $\midtree(I_2)$ respectively to obtain $S'$ without any interference between the two parts. We thus conclude that $S \leqgreedy S_{\midtree} \leqgreedy S_+ \leqgreedy S'$, so that $J = (S, S')$ is indeed an interval.

  For the weights, we simply check that the size of $J$ is the sum of the sizes of $I_1$ and $I_2$ plus one, that $\firstasc(J) = \firstasc(I_1) + \firstasc(I_2)$, and that the side trees of $S'$ are exactly those in $T'_1$ and $T'_2$, thus keeping the ascents of all leaves in $T'_1$ and $T'_2$ except the first ones.
\end{proof}

\subsection{Decomposition of greedy Tamari intervals}

Given a greedy Tamari interval of size $n > 0$, we show that it either takes the form $\addleaf(I, k)$ or $\intproduct(I_1, I_2)$.

\begin{theorem} \label{thm:main-decomp}
  Let $J = (S, S')$ be a greedy Tamari interval of size $n > 0$ and $k = \firstasc(J)$. Then, depending on the situation of the $k$-th node $v_k$ of $S$ in the pre-order, $J$ can be uniquely represented in one of the following two forms:
  \begin{enumerate}
  \item \textbf{Type 1}: If $v_k$ is the leftmost child of its parent in $S$, then $J = \addleaf(I, k - 1)$ for some unique $I \in \interval_{n - 1}$;
  \item \textbf{Type 2}: Otherwise, we have $J = \intproduct(I_1, I_2)$ for some unique pair $I_1 \in \interval_{n_1}$, $I_2 \in \interval_{n_2}$ with $n_1 \geq 0$, $n_2 > 0$ and $n_1 + n_2 + 1 = n$.
  \end{enumerate}
\end{theorem}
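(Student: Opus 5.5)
We treat the two types separately. In each case we construct an explicit inverse to the relevant construction, and then check that it is well defined and really inverts $\addleaf$ or $\intproduct$. Throughout, $u_0,\dots,u_n$ (resp. $u'_0,\dots,u'_n$) denote the nodes of $S$ (resp. $S'$) in pre-order.

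\textbf{Type 1.} Suppose $v_k=u_k$ is the leftmost child of $u_{k-1}$ in $S$. We first note that $u_k$ is a leaf of $S$. Indeed, $u'_k$ is a leaf of $S'$ because $k=\firstasc(S')$, so \cref{cor:parent-fixation} makes $u_k$ a leaf of $S$. The same corollary also shows that $u'_k$ is the leftmost child of $u'_{k-1}$. Let $T$ and $T'$ be obtained from $S$ and $S'$ by deleting these leaves. To show $T\leqgreedy T'$, take the canonical chain from $S$ to $S'$.
\begin{itemize}
\item No slide-up in this chain is performed at node index $k$, since $u_k$ stays a leftmost child along the chain by \cref{cor:parent-fixation}.
\item Deleting the leaf at index $k$, and shifting every index larger than $k$ down by one, turns each slide-up into a slide-up of the smaller trees.
\item The only worry is a slide-up at the corner just after this leaf. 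Because the leaf is a leftmost child, that corner is at $u_{k-1}$ between $u_k$ and its next sibling. Such a move would give $u_k$ a child, contradicting that $u'_k$ is a leaf. So it never occurs.
\end{itemize}
Hence $I=(T,T')\in\interval_{n-1}$ and $J=\addleaf(I,k-1)$. Uniqueness holds because $\addleaf(\cdot,k-1)$ is clearly injective: the deleted leaf is determined.

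\textbf{Type 2.} Suppose $u_k$ is not a leftmost child. Let $R=\midtree(J)$. By \cref{prop:midpoint-tree}, $R$ lies on the canonical chain and $\portion(R,k)$ is linear. We recover $I_2$ and $I_1$ as follows.

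\begin{enumerate}
\item \emph{Splitting index.} In $S$, write $u_k=v_{k_2}$ inside a copy of $T_2$, and let the head of $T_1$ be its left sibling. Concretely, let $u_a$ be the parent of $u_k$ and $u_b$ its preceding sibling. The subtree at $u_b$ should be the head of $T_1$. Its first ascent ends at index $k_1$, which gives $k=k_1+k_2+1$.
\item \emph{Recovering $T'_2$ and $T'_1$.} In $S'$, the node $u'_{k_1+1}$ (edge $e_{k_1+1}$) separates $T'_1$ from $T'_2$: $T'_2$ is the subtree rooted at $u'_{k_1}$, and $T'_1$ is the rest.
\item \emph{Consistency in $R$.} The split index must agree with the side-forest data. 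Using \cref{lem:side-forest-moves} and \cref{prop:midtree-side-pos}, the side trees of $R$ attached at backbone nodes of index at least $k_1+1$ form exactly $\midtree(I_2)$. Those below form $\midtree(I_1)$ after removing the $k_2+1$ extra backbone nodes. Here \cref{lem:side-forest-moves} is used to see that no slide-up from $R$ to $S'$ merges side trees across the split.
\item \emph{Recovering $T_2$ and $T_1$.} $T_2$ is obtained from the subtree of $u_a$ by removing the head subtree at $u_b$. $T_1$ is obtained by putting the head back in place of that subtree.
\item \emph{Interval property.} We check $T_i\leqgreedy T'_i$ by restricting the canonical chain. By \cref{prop:midpoint-tree} and \cref{lem:switch-order}, the chain factors as $S\to R$, then the $S_{\midtree}\to S_+$ slide-ups in reverse, then slide-ups acting separately on the two parts.
\end{enumerate}

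The main obstacle is the last step. We must prove that the chain from $S$ to $R$ splits into straightening moves inside $T_1$'s head and body and inside $T_2$. We must also prove that the head so identified truly satisfies the defining property, namely $C_a=\varnothing$ above it. I would attack this by induction along the canonical chain, using \cref{lem:portion-invariant} to freeze $\portion(\cdot,k_1)$. Uniqueness then follows because $I_1$ and $I_2$ were read off canonically from $J$.
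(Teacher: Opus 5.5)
Your Type 1 argument is correct and is essentially the paper's. By \cref{cor:parent-fixation}, along any chain from $S$ to $S'$ the node of index $k$ remains a leaf and remains the leftmost child of the node of index $k-1$. So no slide-up touches it or the corner after it, and deleting it commutes with the chain.

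Type 2, however, has a genuine gap: you never actually determine $k_1$, and that is where the content of the theorem lies.
\begin{itemize}
\item The subtree $H$ at the preceding sibling of $v_k$ is the head of $T_1$, but it only gives $h$, not $k_1$. The relation $k=k_1+k_2+1$ does not pin down $k_1$ either.
\item In $S=\intproduct(I_1,I_2)$ the head occupies indices $k-h,\dots,k-1$. The root of $T_2$ is the node of index $k_1-h+1$ on the rightmost path of the backbone of $S$. In general this node lies strictly below the parent $u_a$ of $v_k$.
\item Consequently your recipe in step~4 is wrong. Let $S$ have root $w_0$ with a single child $w_1$ carrying two leaves $w_2,w_3$, and let $S'$ be the path with three edges. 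Then $J=\intproduct(I_1,I_2)$ with $I_1$ trivial and $I_2=(P,P)$, where $P$ is the two-edge path. Your recipe instead returns $T_1=T_2=$ the one-edge path. For this $T_1$ the head is all of $T_1$, not $\{w_2\}$, so the property you flagged as unproved really fails. Moreover $\intproduct$ of these pieces is a tree whose root has two children (the first carrying a leaf), which is not $S$.
\item Step~3 is also off. Side trees of $\midtree(I_2)$ can sit $h$ positions lower in $\midtree(J)$ than in the tree $S_+$ of \cref{prop:product-weight}. Take $I_1$ trivial and $T_2=T'_2$ the two-edge path with an extra leaf to the right at its root: that leaf is attached at index $0=k_1$ of $\midtree(J)$. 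So the side forest does not split at index $k_1+1$.
\item In step~2, $T'_2$ is rooted at $u'_{k_1+1}$, not $u'_{k_1}$; $e_{k_1+1}$ is the added edge.
\end{itemize}

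Uniqueness also does not come for free from a canonical reading. Several cut positions on the first ascent of $S'$ can pass all local tests: cutting at a node of $S'$ with a single child, a compatible head--body structure, and matching sizes. Only one of them gives $T_2\leqgreedy T'_2$.

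For example, let $S$ have root children $s_1,s_6$, with $s_1$ having children $s_2,s_5$ and $s_2$ having children $s_3,s_4$ (all of $s_3,\dots,s_6$ leaves). Let $S'$ be the four-edge path with two extra leaves hanging to the right of its node of index $1$. Here $k=4$ and $h=1$, and the correct cut is $k_1=2$. The cut at $0$ passes every local test. It fails only because the root of $T_2$ has two children while that of $T'_2$ has three, and slide-ups never increase the number of children of the root.

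The missing idea is the paper's selection mechanism:
\begin{itemize}
\item Translate conditions (C1)--(C4) on a candidate cut $j$ into the side-tree sizes $c_i(\midtree(J))$ and $c_i(S')$.
\item Set $f(j)=\min\{m\ge -1 : \sum_{i\le m}c_i(\midtree(J))=\sum_{i\le j}c_i(S')\}$, which is well defined by \cref{lem:side-forest-moves}, and $g(j)=f(j)-j+h$.
\item Show that (C1)--(C3) amount to $g(j)=0$ with $g(j)-g(j-1)=-1$.
\item Get existence from $g(-1)=h>0\ge g(k-2)$ together with the fact that all steps of $g$ are $\ge -1$.
\item Get uniqueness by showing that only the largest such $j$ satisfies the comparability inequalities \eqref{eq:cond-comparable}.
\end{itemize}
An induction along the canonical chain, as you propose, does not by itself supply this choice of $j$.
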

\begin{proof}
  Let $v_0, \ldots, v_n$ (resp. $v'_0, \ldots, v'_n$) be the nodes of $S$ (resp. $S'$) in the pre-order. Since $k = \firstasc(J)$, the $k$-th node $v'_k$ of $S'$ is a leaf. It folllows from \cref{cor:parent-fixation} that $v_k$ is also a leaf.

  Suppose that $J$ is of type 1, let $T$ (resp. $T'$) be the plane tree of size $n - 1$ obtained by removing $v_k$ (resp. $v'_k$) from $S$ (resp. $S'$). For simplicity, we identify nodes in $T$ (resp. $T'$) with their counterpart in $S$ (resp. $S'$).  Since $v'_k$ is  a leaf in $S'$,   there is never a slide-up at the corner just after $v_k$, thus the chain of slide-ups that turns $S$ into $S'$ also turns $T$ into $T'$ therefore $T \leqgreedy T'$. Let $I = (T, T') \in \interval_{n-1}$. Since $J$ is of type 1, so that $v_k$ is the leftmost child of its parent $v_{k-1}$ in $S$, one has $J = \addleaf(I, k-1)$. If there is another pair $I', k'$ such that $J = \addleaf(I', k')$, then by \cref{prop:addleaf-weight}, we have $k' = k - 1$, and thus $I' = I$ as we are adding leaves at the same positions of trees of $I$ and $I'$ to obtain the same trees in $J$. This implies the uniqueness of the pair $I,k$.

  If $J$ is of type 2, then $v_k$ has a sibling $u$ preceding it. Denote by $H$ the subtree of $S$ rooted at $u$, and by $h$ the number of nodes in $H$. It is clear that all the nodes of $H$ precede $v_k$ in the pre-order, hence $H$ is in the backbone of $S$. In order to  decompose $I$ into $\intproduct(I_1, I_2)$ with $I_1 = (T_1, T'_1)$ and $I_2 = (T_2, T'_2)$, by \Cref{defn:product-constr}, $S'$ must be obtained by attaching $T'_2$ to the leftmost leaf of $T'_1$. Since $n_1 \geq 0$ and $n_2 > 0$, there is thus an index $0 \leq j_* < k - 1$ such that $v'_{j_*}$ in $S'$ is the leftmost leaf of $T'_1$, therefore $T_1', T_2'$ are obtained by cutting the edge between $v'_{j_*}$ and $v'_{j_*+1}$ in $S'$. Note that the backbone of $T_1$ must have $j_* + 1$ nodes and contain $H$, leaving $j_* + 1 - h$ nodes not in $H$. We then obtain $T_1$ by cutting the edge from $v_{j_* - h + 1}$ to its parent, which ensures the correct number of nodes in the backbone of $T_1$, and attach $H$ to the former parent of $v_{j_* - h + 1}$ as the rightmost subtree. When $v_{j_* - h + 1}$ has no parent, then $T_1 = H$. Note that the construction is invalid if $v_{j_* - h + 1}$ is not on the rightmost path of the backbone of $S$. The tree $T_2$ is the rest of $S$ after the construction of $T_1$. See \Cref{fig:int-decomp-type-2} for an example. It is clear that not every index $j_*$ is valid, and the validity of $j_*$ is given by the following conditions:
  \begin{enumerate}[label=\textbf{(C\arabic*)}]
  \item \textbf{Cutting at leaf}: The node $v'_{j_*}$ on $S'$ should have exactly one child. This is to ensure that $v'_{j_*}$ gives the last node of the backbone of $T'_1$, which must be a leaf. \label{cond:cut-at-leaf}
  \item \textbf{Head-body cutting}: The node $v_{j_* - h + 1}$ should be on the rightmost path of the backbone of $S$, and its parent, when it exists, should have a non-empty side-tree. This is to ensure that our construction of $T_1$ is the inverse of its head-body decomposition. \label{cond:cut-head-body}
  \item \textbf{Size-matching}: $T_1$ and $T'_1$ should have the same size, which is equivalent to $T_2$ and $T'_2$ having the same size. \label{cond:size-match}
  \item \textbf{Comparability}: We should have $T_1 \leqgreedy T'_1$ and $T_2 \leqgreedy T'_2$. \label{cond:comparable}
  \end{enumerate}
  It is clear that an index $j_*$ with $0 \leq j < k - 1$ gives a valid decomposition $J = \intproduct(I_1, I_2)$ if and only if it satisfies the conditions above.

  Let us now  analyze each condition in terms of the side-forests of $S$ and $S'$. Recall from \Cref{defn:backbone} that, for $R$ in the interval $I$, the number of non-root nodes in the side-tree $\tau_i(R)$ rooted at the $i$-th node of $R$ in the pre-order is denoted by $c_i(R)$. Let $S_{\midtree}$ be the midpoint tree of $J$.
  \begin{itemize}
  \item Condition \Cref{cond:cut-at-leaf} is equivalent to $\tau_{j_*}(S')$ being empty, thus $c_{j_*}(S') = 0$.
  \item Condition \Cref{cond:cut-head-body} is equivalent to $c_{j_* - h}(S_{\midtree}) > 0$ when $j_* \geq h$ by \Cref{defn:midpoint,lem:side-forest-moves}. We note that this imposes $j_* \geq h - 1$, as the only case where $j_* < h$ is when $v_{j_* - h + 1}$ is the root of $S$, meaning that $j_* = h - 1$.
  \item Condition \Cref{cond:size-match} is equivalent to the equality of total size of the side-forests of $T_1, T'_1$, since they have the same number of nodes in the backbone. If we further suppose \Cref{cond:cut-head-body}, since $v_{j_* - h + 1}$ is on the rightmost path of the backbone of $S$, by \Cref{defn:midpoint}, the condition \Cref{cond:size-match} is equivalent to
    \begin{equation}
      \label{eq:cond-size-match}
      \sum_{i=0}^{j_* - h} c_i(S_{\midtree}) = \sum_{i=0}^{j_*} c_i(S').
    \end{equation}
  \item For \Cref{cond:comparable}, we first observe that $\firstasc(T'_1) = j_*$ and $\firstasc(T'_2) = k - 1 - j_*$. Let $R_1 = \straight_{j_*}(T_1)$ be the $j_*$-straightening of $T_1$, and $R_2 = \straight_{k - 1 - j_*}(T_2)$ the $(k - 1 - j_*)$-straightening of $T_2$. By \Cref{prop:midpoint-tree}, $T_1 \leqgreedy T'_1$ if and only if $R_1 \leqgreedy T'_1$, and $T_2 \leqgreedy T'_2$ if and only if $R_2 \leqgreedy T'_2$.

    Let us now consider the structure of $R_1$ and $R_2$. By the definition of head and tail of $T_1$, the last $h$ nodes of the backbone of $R_1$ form a linear tree. By \Cref{defn:midpoint}, similar to the decomposition of $S$, the midpoint tree $S_{\midtree}$ of $J$ can be obtained by first replacing the last $h$ nodes of $R_1$ by $R_2$, then attaching the linear tree of these $h$ nodes on top of the last leaf of the backbone of $R_2$. See \Cref{fig:int-decomp-type-2} for an example. Hence, we have
    \[
      \tau_i(S_{\midtree}) =
      \begin{cases}
        \tau_i(R_1) &\text{ for } 0 \leq i \leq j_* - h; \\
        \tau_{i - j_* + h - 1}(R_2) &\text{ for } j_* - h + 1 \leq i \leq k - h; \\
        \varnothing &\text{ for } k - h + 1 \leq i \leq k.
      \end{cases}
    \]
    Since $S_{\midtree} \leqgreedy S'$, we see that the slide-up moves on the canonical path from $S_{\midtree}$ to $S'$ restricted to nodes in $\tau_i(S_{\midtree})$ with $i \leq j_* - h$ gives us a path from $R_1$ to $T'_1$, hence we always have $T_1 \leqgreedy T'_1$. For $R_2$, we only need to ensure that every side-tree of $R_2$ is lower than the corresponding part of side-tree of $T'_2$, as the slide-up moves on the canonical path from $S_{\midtree}$ to $S'$ not changing side-tree positions remain valid from $R_2$ to $T'_2$. By \Cref{lem:side-straightening,lem:side-forest-moves}, we have $R_2 \leq T'_2$ if and only if $\sum_{i = 0}^{i'} c_{i}(R_2) \geq \sum_{i = 0}^{i'} c_{i}(T'_2)$ for all $i' \geq k - 2 - j_*$, as $c_{k - 1 - j_*}(R_2) = c_{k - 1 - j_*}(T'_2) = 0$. By the expression of $c_i(S_{\midtree})$ and the construction of $T_2$, with the change of variable $j' = i' + j_* + 1$, this condition is equivalent to
    \[
      \sum_{i = j_* - h + 1}^{j' - h} c_{i}(S_{\midtree}) \geq \sum_{i = j_* + 1}^{j'} c_{i}(S') \text{ for all } j_* < j' \leq k - 2.
    \]
    If we further suppose \Cref{cond:cut-head-body,cond:size-match}, hence \Cref{eq:cond-size-match}, we have
    \begin{equation}
      \label{eq:cond-comparable}
      \sum_{i = 0}^{j' - h} c_{i}(S_{\midtree}) \geq \sum_{i = 0}^{j'} c_{i}(S') \text{ for all } j_* < j' \leq k - 2.
    \end{equation}
  \end{itemize}

  \begin{figure}
    \centering
    \insertfig{13}{1}
    \caption{Example of the decomposition of a type-2 greedy Tamari interval}
    \label{fig:int-decomp-type-2}
  \end{figure}
  
  Motivated by \Cref{eq:cond-size-match,eq:cond-comparable}, we define the function $f$ by
  \begin{equation}
    \label{eq:f-def}
    f(j) = \min \left\{ m \geq -1 \; \middle| \; \sum_{i = 0}^m c_i(S_{\midtree}) = \sum_{i = 0}^{j} c_i(S') \right\}.
  \end{equation}
  Here, we take the convention that the empty summation for $m = -1$ is zero, and we take $f(-1) = -1$. Equivalently, $f(j)$ is the root position of the highest side-tree of $S_{\midtree}$ that is part of subtrees of $S'$ rooted at or below $v_j$, and is $-1$ when the set of such side-trees is empty. See \Cref{fig:int-decomp-type-2} for an example of $f(j)$. By \Cref{lem:side-forest-moves}, the set in \Cref{eq:f-def} is never empty, hence $f(j)$ is well-defined. By \Cref{eq:f-def}, the function $f$ is non-decreasing, and $f(j) > f(j - 1)$ if and only if $c_j(S') > 0$. Note that this holds for $j = 0$, as $f(0) = -1$ if and only if $c_0(S') = 0$.
  
  We also define the function $g(j)$ by
  \begin{equation}
    \label{eq:g-def}
    g(j) = f(j) - j + h.
  \end{equation}
  Note that \Cref{eq:cond-size-match} is satisfied for $j_*$ when $g(j_*) = 0$. We may also express the other conditions in terms of $f$ and $g$. Since $f$ is non-decreasing, one has $g(j) - g(j - 1) \geq -1$ and equality holds exactly when $c_j(S') > 0$. Since $g(-1) = f(-1) + h > 0$ (indeed, $f(-1) = -1$ and $h > 0$) and $g(k - 2) = f(k - 2) - k + 2 - h \leq 0$ as $f(k - 2) \leq k - 1$ by definition, there is an index $0 \leq j' \leq k - 2$ such that $g(j') = 0$ and $g(j') - g(j' - 1) = -1$. This is equivalent to $f(j') = j' - h$ and $f(j') = f(j' - 1)$. This condition is equivalent to \Cref{cond:cut-at-leaf}, \Cref{cond:cut-head-body} and \Cref{cond:size-match} combined:
  \begin{itemize}
  \item \Cref{cond:cut-at-leaf} is equivalent to $c_{j_*}(S') = 0$, thus also to $f(j') = f(j' - 1)$;
  \item \Cref{cond:cut-head-body} and \Cref{cond:size-match} is equivalent to the definition of $f(j_*)$ by \Cref{eq:f-def}. More precisely, \Cref{cond:cut-head-body}, which is equivalent to $c_{j_* - h}(S_{\midtree}) > 0$, corresponds to the minimality of $f(j_*)$, while \Cref{eq:cond-size-match} for \Cref{cond:size-match} is precisely the equality condition of the set in \Cref{eq:f-def}.
  \end{itemize}

  Now we show that among all indices $0 \leq j' \leq k - 2$ satisfying $f(j') = j' - h$ and $f(j') = f(j' - 1)$, only the largest one $j_*$ satisfies \Cref{cond:comparable}. Indeed, for $j' < j_*$, there must be some $j' \leq j'' < j_*$ such that $g(j'') > 0$, hence $f(j'') > j'' - h$. By \Cref{eq:f-def}, we have
  \[
    \sum_{i = 0}^{j'' - h} c_i(S_{\midtree}) < \sum_{i = 0}^{f(j'')} c_i(S_{\midtree}) = \sum_{i = 0}^{j''} c_i(S').
  \]
  The strict inequality comes from the minimality of $f(j'')$ in \Cref{eq:f-def}. Thus, \Cref{eq:cond-comparable} is violated by $j''$, meaning that $j'$ does not satisfy \Cref{cond:comparable}. Now we show that $j_*$ indeed satisfies \Cref{cond:comparable}. By the maximality of $j_*$, for every $j$ with $j_* < j \leq k - 2$, we have $g(j) \leq 0$, meaning that $f(j) \leq j' - h$. As $f$ is increasing, we thus have
    \[
      \sum_{i = 0}^{j' - h} c_i(S_{\midtree}) \geq \sum_{i = 0}^{f(j)} c_i(S_{\midtree}) = \sum_{i = 0}^{j} c_i(S').
    \]
    Hence, \Cref{eq:cond-comparable} holds for $j_*$. As \Cref{cond:cut-head-body,cond:size-match} hold for $j_*$, \Cref{cond:comparable} also holds for $j_*$. With $j_*$ the only valid index for all four conditions, we have the unique decomposition of $J$ of type $2$.
\end{proof}

\begin{theorem} \label{thm:main-func-eq}
  The generating function $F_{\interval}$ of greedy Tamari intervals satisfies
  \[
    F_{\interval} = 1 + xt F_{\interval}(t,x)^2 + xt \Omega F_{\interval}(t,x).
  \]
  We thus have $F_{\interval}(t,x) = F_{\mathcal M}(t,x)$, where $F_{\mathcal M}(t,x)$ the generating function of bipartite planar maps, which satisfies \Cref{eq:interval-decomp}.
\end{theorem}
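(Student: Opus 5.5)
The plan is to sum the weight over $\interval_n$ for each $n$, using the bijective decomposition of \cref{thm:main-decomp} together with the weight formulas of \cref{prop:addleaf-weight,prop:product-weight}. The empty interval ($n=0$) accounts for the constant term $1$. Every interval $J$ of size $n>0$ is then exactly one of two kinds. Either $J=\addleaf(I,k-1)$ with $I\in\interval_{n-1}$, or $J=\intproduct(I_1,I_2)$ with $I_1\in\interval_{n_1}$, $I_2\in\interval_{n_2}$, $n_1\ge 0$, $n_2>0$, and this representation is unique.

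One caveat on the statistic: \cref{eq:interval-weight} should be read as using $\alf(T')$, i.e.\ the ascent profile of the interval, consistent with \cref{defn:greedy-tamari-interval}. All the weight computations in \cref{prop:addleaf-weight,prop:product-weight} are carried out on $T'$.

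\textbf{Type 1.} I would first check that the map $(I,k')\mapsto \addleaf(I,k')$, with $0\le k'\le \firstasc(I)$, is a bijection onto type-1 intervals.
\begin{itemize}
\item \emph{Image is type 1.} By construction the new leaf is the leftmost child of $u_{k'}$ and becomes the $(k'+1)$-th node. By the proof of \cref{prop:addleaf-weight}, $\firstasc(\addleaf(I,k'))=k'+1$. Hence the $\firstasc$-th node is the leftmost child of its parent.
\item \emph{Range of $k'$.} One needs $k'\le\firstasc(I)$, since only nodes on the first ascent give first ascent $k'+1$. Conversely, for type-1 $J$, \cref{thm:main-decomp} gives $J=\addleaf(I,k-1)$ with $k-1\le\firstasc(I)$, because $v'_{k-1}$ lies on the first ascent of $T'$.
\end{itemize}
Summing \cref{prop:addleaf-weight} over $k'$, an interval $I$ with $\firstasc(I)=\ell$ contributes
\[
t\Bigl(\sum_{k'=0}^{\ell-1}x^{k'+1-\ell}p_{\ell-k'}+x\Bigr)\intweight(I)
= t\bigl(x^{-\ell}\Omega x^{\ell}\cdot x+x\bigr)\intweight(I).
\]
Setting $j=\ell-k'$ in \cref{eq:omega-def} turns the sum into $\sum_{j=1}^{\ell}x^{1-j}p_j=x\cdot x^{-\ell}\Omega x^\ell$. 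Since $\intweight(I)=x^\ell\cdot(\text{monomial in the }p_i)$, linearity of $\Omega$ gives a total type-1 contribution of $xt\,\Omega F_\interval+xt(F_\interval-1)$.

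\textbf{Type 2.} By \cref{prop:product-weight} and the uniqueness in \cref{thm:main-decomp}, the type-2 contribution is $xt\,F_\interval\,(F_\interval-1)$. The factor $F_\interval-1$ reflects the constraint $n_2>0$. One must also confirm that every pair $(I_1,I_2)$ with $n_2>0$ yields a type-2 interval, so that $\intproduct$ is onto exactly the type-2 class. This holds because in $S$ the head of $T_1$ is inserted immediately to the left of the $k_2$-th node of $T_2$, and that node becomes the $\firstasc(J)$-th node of $S$. It is therefore not a leftmost child.

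\textbf{Conclusion and main obstacle.} Adding the pieces,
\[
1+xt\,\Omega F_\interval+xt(F_\interval-1)+xt\,F_\interval(F_\interval-1)=1+xtF_\interval^2+xt\,\Omega F_\interval .
\]
The equation determines its solution uniquely as a power series in $t$, since the coefficient of $t^n$ is given recursively by lower coefficients. It follows that $F_\interval=F_\map$ by \cref{eq:interval-decomp}.

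The main obstacle is the bookkeeping of the boundary case $k'=\firstasc(I)$, and verifying that the two constructions land exactly in the two types, so that the bijection is exact. For this I would rely on the explicit node positions in \cref{defn:addleaf-constr,defn:product-constr}.
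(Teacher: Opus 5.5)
Your strategy is the paper's: split by the two types of \cref{thm:main-decomp} and sum the weights from \cref{prop:addleaf-weight,prop:product-weight}. Several of your checks are correct and are not spelled out in the paper: that $\addleaf(I,k')$ with $k'\le\firstasc(I)$ is of type 1, that $\intproduct(I_1,I_2)$ is of type 2, and that the weight must be read off $\alf(T')$.

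However, the type-1 bookkeeping is wrong, exactly at the boundary case you flagged. The choice $k'=\firstasc(I)$ is available for every $I$, including the trivial interval of size $0$. There $\firstasc(I)=0$, the $\Omega$-sum is empty, and $\addleaf(I,0)$ is the unique interval of size $1$ (a single edge), which contributes $xt$. Nothing else counts this interval, since $\intproduct$ only produces intervals of size at least $2$. Hence the type-1 contribution is $xt\,\Omega F_{\interval}+xt\,F_{\interval}$, as in the paper, not $xt\,\Omega F_{\interval}+xt(F_{\interval}-1)$.

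Your final display then hides this error behind a false identity. Indeed $xt(F_{\interval}-1)+xt\,F_{\interval}(F_{\interval}-1)=xt(F_{\interval}^2-1)$, so your left-hand side equals $1+xtF_{\interval}^2+xt\,\Omega F_{\interval}-xt$. Its coefficient of $t^1$ is $0$ rather than $x$, reflecting that your count never includes the size-$1$ interval. With the corrected term you get $xt\,F_{\interval}+xt\,F_{\interval}(F_{\interval}-1)=xt\,F_{\interval}^2$. The functional equation, and hence $F_{\interval}=F_{\map}$ by uniqueness of the power-series solution, then follows as you describe.
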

\begin{proof}
  We look at the contribution of different types of intervals to $F_{\interval}(t, x)$.
  \begin{itemize}
  \item The interval of size $0$: $1$.
  \item Type 1 intervals in \cref{thm:main-decomp}: $xt\Omega F_{\interval}(t,x) + xt F_{\interval}(t,x)$, by \cref{prop:addleaf-weight}.
  \item Type 1 intervals in \cref{thm:main-decomp}: $xt F_{\interval}(t,x) (F_{\interval}(t,x) - 1)$, by \cref{prop:product-weight} and the fact that $I_2$ may not be of size $0$ in $\intproduct(I_1, I_2)$.
  \end{itemize}
  All the contributions add up to the right-hand side of our claimed equation, which is the same as \cref{eq:interval-decomp} for $F_{\mathcal{C}^{(2)}}(t,x)$. As both $F_{\interval}$ and $F_{\mathcal{C}^{(2)}}$ are power series in $t$ with coefficients polynomial in $x$, they are totally determined by \cref{eq:interval-decomp}, meaning that they are equal.
\end{proof}

Comparing \Cref{thm:main-func-eq} with \Cref{eq:interval-decomp}, up to rearrangement of terms, we see that the recursive decomposition of greedy Tamari intervals in \Cref{thm:main-decomp} is isomorphic to that of bipartite planar maps illustrated in \Cref{fig:bip-decomp}. We thus settle \cite[Conjecture~6.1]{greedy-tamari-interval} for the case $m = 1$.

\section{Greedy $m$-Tamari intervals and $(m+1)$-constellations} \label{sec:m-decomp}

\begin{defn} \label{defn:m-ballot}
  Given $m \geq 1$, an \tdef{$m$-ballot path} is a Dyck path such that the lengths of its ascents are multiples of $m$. The length of the Dyck path is a multiple of $m$, say $mn$ and we call $n$ its size and denote by $\mdyck_n \subseteq \dyck_{mn}$ the set of $m$-ballot paths of size $n$. The special case of $1$-ballot paths is simply Dyck paths.
\end{defn}

Similarly, by taking the contour path we have a corresponding notion of trees.

\begin{defn} \label{defn:m-tree}
  An \tdef{$m$-branch tree} is a rooted plane tree such that, for every leaf, the length of its ascent is a multiple of $m$. We denote by $\mtree_n$ the set of $m$-branch trees with $mn$ edges, and one has $\mtree_n \subseteq \tree_{mn}$. The size of an $m$-branch tree is its number of edges divided by $m$.
\end{defn}

It follows from \cref{lem:alf-cover} that, if $T$ is an $m$-branch tree and $T \covgreedy T'$, then $T'$ is also an $m$-branch tree. See \Cref{fig:greedy-cover} for an example with $m = 2$ and $n = 9$. Hence, $\mtree_n$ is an order filter of $(\tree_{mn}, \leqgreedy)$. We therefore have the following definition.

\begin{figure}
  \centering
  \insertfig{1}{1}
  \caption{Example of a covering relation in the greedy $2$-Tamari poset of order $9$, in both forms of $2$-ballot paths and $2$-branch trees}
  \label{fig:greedy-cover}
\end{figure}

\begin{defn}[See~\cite{greedy-tamari-interval}] \label{defn:m-greedy-poset-tree}
  The restriction of the relation $\leqgreedy$ to $\mtree_n$ defines a poset called the \tdef{greedy $m$-Tamari poset}, whose cover relation is the restriction of $\covgreedy$.
\end{defn}

See \Cref{fig:greedy-poset} for an illustration of both $(\mdyck_n, \leqgreedy)$ and $(\mtree_n, \leqgreedy)$ for $m = 2$ and $n = 3$.

\begin{figure}
  \centering
  \insertfig{3}{1}
  \caption{Greedy $2$-Tamari poset of order $3$, with $2$-ballot paths as elements on the left, and with $m$-branch trees as elements on the right.}
  \label{fig:greedy-poset}
\end{figure}

We remark that the \tdef{trivial tree} with one vertex and no edge is an $m$-branch tree for all $m$. In fact, it is the only element of $\mtree_0$ for all $m$. Note that the maximal element in $(\mtree_n, \leqgreedy)$ is the linear tree with $mn$ edges. There are also $m$-versions of greedy Tamari intervals. A \tdef{greedy $m$-Tamari interval} of size $n$ is a pair $I = (T, T')$ with $T, T' \in \mtree_n$ such that $T \leqgreedy T'$. We denote by $\minterval_n$ the set of greedy $m$-Tamari intervals of size $n$. 

The generating function $F_{\minterval}(t, x) \equiv F_{\minterval}(t, x; p_1, p_2, \ldots)$ of greedy $m$-Tamari intervals is obtained by restricting of the sum in \Cref{eq:interval-weight} to $m$-branch trees while taking into account that the length of every ascent is a multiple of $m$. More precisely, comparing to \Cref{eq:interval-weight}, we define the weight $\intweight$ of a greedy $m$-Tamari interval $I$ by
\[
  \mintweight(I) = x^{\firstasc(I) / m} \prod_{i=2}^{r} p_{a_i / m},
\]
where both $\alf(I) = (a_1, \ldots, a_r)$ and $\firstasc(I)$ are taken for $I$ as a greedy Tamari interval in $(\tree_{mn}, \leqgreedy)$. The generating function is thus
\begin{equation}
  \label{eq:m-interval-gf}
  F_{\minterval}(t, x) = \sum_{n \geq 0} t^n \sum_{I \in \minterval_n} \mintweight(I).
\end{equation}

\subsection{Planar constellations and their decomposition}

A \tdef{planar $m$-constellation} is a special planar map where faces are colored black and white, with the outer face colored white and each edge bordering a black and a white face. Furthermore, black faces are called \tdef{hyperedge} and are all of degree $m$. White faces are called \tdef{hyperfaces} and are all of degree divisible by $m$. See \cref{fig:constellation} for an example of a planar constellation. The size of an $m$-constellation is the number of its hyperedges. Note that we allow the unique $m$-constellation of size $0$, which consists of a single vertex. We denote by $\mconst_n$ the set of $m$-constellations of size $n$. One may also encode elements in $\mconst_n$ with certain $(m+1)$-tuples of permutations in $S_n$, see \cite{lando-zvonkin}. One may identify planar $2$-constellations with rooted planar bipartite maps by transforming each hyperedge into an edge, as every hyperedge is of degree 2.

\begin{rem}
  The definition of $m$-constellations given here is in fact what is usually called \emph{$m$-hypermaps} in the literature. For $m$-constellations, there is a further condition on vertex coloring. However, in the planar case the two notions coincide, which allows us to slightly simplify the definition. Readers are again referred to \cite{lando-zvonkin} for further details.
\end{rem}

\begin{figure}
  \centering
  \insertfig{4}{0.4}
  \caption{Example of an $m$-constellation}
  \label{fig:constellation}
\end{figure}

We now define the generating function $F_{\mconst}$ of $m$-constellations that we use here. Given an $m$-constellations $C$, we denote by $\outdeg(C)$ the degree of its outer face divided by $m$, and we denote by $\deg(f)$ the degree of a given inner face $f$ divided by $m$. We then define $F_{\mconst}(t, x) \equiv F_{\mconst}(t, x; p_1, p_2, \ldots)$ by
\begin{equation}
  \label{eq:m-const-gf}
  F_{\mconst}(t, x) = \sum_{n \geq 0} t^n \sum_{C \in \mconst_n} x^{\outdeg(C)} \prod_{f\text{ inner face of }C} p_{\deg(f)}.
\end{equation}
In \cite[Section~4.1]{fang-thesis}, a recursive decomposition of planar $m$-constellations is given, leading to the following functional equation:
\begin{equation}
  \label{eq:m-const-decomp}
  F_{\mconst}(t, x) = 1 + tx(F_{\mconst} + \Omega)^m (1).
\end{equation}

\subsection{Refined and generalized conjecture on greedy $m$-Tamari intervals} 

Albeit much hope and effort, we did not succeed in generalizing our recursive decomposition of greedy Tamari intervals to the more general case of greedy $m$-Tamari intervals. However, supported by numerical evidence, we propose a more refined and generalized version of \cite[Conjecture~6.1]{greedy-tamari-interval}. We start with a generalization of greedy $m$-Tamari poset.

\begin{defn} \label{defn:m-r-tree}
  Given $m \geq 2$ and $ r\geq 0$, an \tdef{$(m, r)$-branch tree} is a rooted plane tree $T$ such that the length of the ascent of every leaf is divisible by $m$, except for the leftmost leaf, where it is of the form $ma + r$ for some integer $a \geq 0$. The number of edges in an $(m, r)$-branch tree $T$ is of the form $mn + r$ with some integer $n \geq 0$, since the ascent of leaves partition all edges in $T$, and we say that the \tdef{size} of $T$ is $n$. We denote by $\mrtree_n$ the set of $(m,r)$-branch trees of size $n$. We define the greedy $(m, r)$-Tamari poset to be $(\mrtree_n, \leqgreedy)$, with $\leqgreedy$ from \Cref{defn:m-greedy-poset-tree}. A \tdef{greedy $(m, r)$-Tamari interval} of size $n$ is a pair $(T, T')$ with $T, T' \in \mrtree_n$ such that $T \leqgreedy T'$, and we denote by $\mrinterval_n$ the set of all greedy $(m,r)$-Tamari intervals.
\end{defn}

We note that, again by the same argument for greedy $m$-Tamari posets, $\mrtree_n$ is also an order filter in $(\tree_{mn + r}, \leqgreedy)$, hence the cover relation of the greedy $(m, r)$-Tamari poset is still given by $\covgreedy$.

\begin{rem} \label{rem:m-r-tree}
  Like for the $m$-Tamari poset, our definition of greedy $(m,r)$-Tamari poset is also a specialization of the greedy $\nu$-Tamari poset defined in \cite{dermenjian}. More precisely, by taking contour walks, we see that $(\mrtree_n, \leqgreedy)$ is the $\nu$-Tamari poset with $\nu = NE^r(NE^m)^n$. By \Cref{defn:m-greedy-poset-tree} and \Cref{defn:m-r-tree}, we know that the greedy $(m, 0)$-Tamari poset of order $n$ is simply the greedy $m$-Tamari poset of order $n$, and the greedy $(m, m)$-Tamari poset of order $n$ is the greedy $m$-Tamari poset of order $n + 1$. We note that a greedy $(m, r)$-Tamari interval of size $n$ is also a greedy Tamari interval of size $mn + r$, as the order relation $\leqgreedy$ is the same.
\end{rem}

We now define the generating function $F_{\mrinterval{}}(t, x) \equiv F_{\mrinterval{}}(t, x; p_1, p_2, \ldots)$ for greedy $(m, r)$-Tamari intervals. Again, we need to slightly accommodate for the weights of these intervals. For $I \in \mrinterval_n$, noting that $\firstasc(I)$ takes the form $ma + r$ by \cref{defn:m-r-tree}, we define
\[
  \mrintweight(I) = x^{(\firstasc(I) - r) / m} \prod_{i = 2}^r p_{a_i / m},
\]
where both $\firstasc(I)$ and $\alf(I) = (a_1, \ldots, a_r)$ are taken for $I$ as a greedy Tamari interval in $(\tree_{mn + r}, \leqgreedy)$. We define
\begin{equation}
  \label{eq:mr-interval-gf}
  F_{\mrinterval{}}(t, x) = \sum_{n \geq 0} t^n \sum_{I \in \mrinterval_n} \mrintweight(I).
\end{equation}
We note that $F_{\mrinterval[0]{}}(t, x) = F_{\minterval}(t, x)$ and $F_{\mrinterval[m]{}}(t, x) = F_{\minterval}(t, x) - 1$. Numerical computation suggests the following conjecture.

\begin{conj} \label{conj:mr-interval}
  Given $m \geq 1$ and $1 \leq r \leq m-1$, the generating function $F_{\mrinterval{}}(t, x)$ satisfies
  \[
    F_{\mrinterval{}}(t, x) = (F_{\mrinterval[0]{}}(t, x) + \Omega)^{r+1}(1).
  \]
  For $r = 0$, we have
  \[
    F_{\mrinterval[0]{}}(t, x) = 1 + xt (F_{\mrinterval[0]{}}(t, x) + \Omega)^{m+1}(1).
  \]
  These equations can be recast into the quadratic recurrences
  \begin{equation}
    F_{\mrinterval[r+1]{}}(t, x) = (F_{\mrinterval[0]{}}(t, x) + \Omega)F_{\mrinterval{}}, \qquad 0 \leq r \leq m - 2, \label{eq:mr-conjrec1}
  \end{equation}
  \begin{equation}
    F_{\mrinterval[0]{}}(t, x) = 1 + xt(F_{\mrinterval[0]{}}(t, x) + \Omega)F_{\mrinterval[m-1]{}}. \label{eq:mr-conjrec2}
  \end{equation}
\end{conj}
The functional equation for $r = 0$ in \Cref{conj:mr-interval} is exactly the same as that of $(m+1)$-constellations, and it is already known to Bousquet-Mélou and Chapoton (private communication, 2024). However, the case of general $r$ seems to be new, which strongly suggests a recursive decomposition in steps of greedy $m$-Tamari intervals. It is hoped that a suitable generalization of the constructions in this paper might prove the recursions in the form of quadratic recurrences given by \cref{eq:mr-conjrec1,eq:mr-conjrec2}.

\section*{Acknowledgment} The authors thank Mireille Bousquet-Mélou for informative discussions.

\printbibliography

\end{document}